\newcommand{\mylabel}[2]{#2\def\@currentlabel{#2}\label{#1}}
\newtheorem{theorem}{Theorem}[section]
\newtheorem{lemma}[theorem]{Lemma}
\newtheorem{proposition}[theorem]{Proposition}
\theoremstyle{definition}
\newtheorem{definition}[theorem]{Definition}
\newtheorem{example}[theorem]{Example}
\newtheorem{remark}[theorem]{Remark}
\numberwithin{equation}{section}
\newcommand{\ba}{\begin{array}}
\newcommand{\ea}{\end{array}}
\begin{document}

\title[Growth Rate and Biomass in Stream Networks]{Maximizing Metapopulation Growth Rate and Biomass in Stream Networks}

\maketitle

\author{Tung D. Nguyen\footnote{Department of Mathematics, Texas A\&M University, College Station, TX 77843, USA}, Yixiang Wu\footnote{Department of Mathematical Sciences, Middle Tennessee State University, Murfreesboro, Tennessee 37132, USA}, Amy Veprauskas\footnote{Department of Mathematics, University of Louisiana at Lafayette, Lafayette, LA 70501, USA}, Tingting Tang\footnote{Department of Mathematics and Statistics, San Diego State University, San Diego, CA 92182, USA}, Ying Zhou\footnote{Department of Mathematics, Lafayette College, Easton, PA 18042, USA}, Charlotte	Beckford\footnote{Department of Mathematics, University of Tennessee, Knoxville, TN 37916, USA}, Brian Chau\footnote{Department of Mathematical and Statistical Sciences, University of Alberta, Edmonton, AB T6G 2G1, Canada}, Xiaoyun Chen\footnote{Department of Mathematics and Statistics, University of North Carolina at Charlotte, Charlotte, NC 28223, USA}, Behzad Djafari Rouhani\footnote{Department of Mathematical Sciences, University of Texas at El Paso, El Paso, TX 79968, USA},  Yuerong Wu\footnote{Department of Mathematics and Statistics, Georgia State University, Atlanta, GA 30303, USA}, Yang	Yang\footnote{Department of Mathematics, The Ohio State University, Columbus, OH 43210, USA},  and Zhisheng Shuai\footnote{Department of Mathematics, University of Central Florida, Orlando, Florida 32816, USA}}

\begin{abstract}
We consider the logistic metapopulation model over a stream network and use the metapopulation growth rate and the total biomass (of the positive equilibrium) as metrics for different aspects of population persistence. Our objective is to find distributions of resources that maximize these persistence measures. We begin our study by considering stream networks consisting of three nodes and  prove that the strategy to maximize the total biomass is to concentrate all the resources in the most upstream locations. In contrast, when the diffusion rates are sufficiently small,  the metapopulation growth rate is maximized when all resources are concentrated in one of the most downstream locations.  These two main results are generalized to stream networks with any number of patches.   
\end{abstract}

\noindent\textbf{Keywords:} patch,   metapopulation, network topology, single species, stream networks, drift-diffusion ratio \\
\noindent\textbf{2020 MSC:} 92D25, 92D40, 34C11

\section{Introduction}

Streams, rivers (streams of large water bodies) and creeks (streams of small water bodies) are important components of freshwater ecosystems. One of their key features is the unidirectional current of flowing water (lotic) due to gravity, in contrast with standing bodies of water in lakes and ponds (lentic). This characterizes two kinds of movement of biological organisms in a stream: random diffusion and directed drift. It is of both theoretical and practical significance to understand the joint impact of diffusion and drift on the persistence of an ecosystem, which is the core of management and conservation problems in freshwater habitats \cite{mckenzie2012r_0,pachepsky2005persistence,ramirez2012population,speirs2001persistence}.

Recent research on population persistence in stream networks have highlighted the impact of driving factors such as the size of habitat (patch, stream, watershed) \cite{carrara2014,HILDERBRAND2003,lewis2011patchsize, tamario2021}, connectivity \cite{carrara2014,fagan2002connectivity,samia2015,tamario2021}, human influences \cite{peoples2011} and flow regime \cite{fagan2002connectivity,liu2021asymptotics,mazari2022shape, samia2015,speirs2001persistence}. Many questions and challenges still remain due to the complexity of the ecosystem in stream networks, and rigorous theoretical results are rare, even for small networks.

In this paper we use directed graphs to describe stream networks, where individuals are assumed to live in patches (or nodes, vertices) and the weights of the edges indicate the movement rate of individuals between patches. These patches can be thought of as a section of a stream in the stream system. We  start our work by considering the logistic population model in 
simple stream networks of three nodes (these networks were considered in two recent publications \cite{jiang2020two, Jiang-Lam-Lou2021}).  Later, we  define {\it leveled graphs} and {\it homogeneous flow stream networks} of $n$ nodes, which are generalizations of the simple stream networks of three nodes. Moreover, we  generalize our results on the  metapopulation logistic model to these stream networks. 

We are interested in how to maximize the persistence of a single species living in a stream network system. In each patch, we suppose that  the population follows a logistic type growth functional, where the environmental carrying capacity is assumed to be a constant. The growth rates of individuals in each patch are assumed to be dependent on resource availability, 
where the total amount of resources in the stream network is assumed to be a constant. The persistence of the species in the stream network is quantified in terms of two measures: the metapopulation growth rate and the total biomass (of the stable positive equilibrium). Therefore, the problems we consider are to maximize these persistence measures when varying the distribution of resources.

The total biomass for single species population models has been studied extensively \cite{bai2016optimization, deangelis2016dispersal, gao2022total,heo2021fragmentation, inoue2021unboundedness, liang2012dependence,  lou2006effects,   mazari2020optimal,  mazari2022optimisation, mazari2021fragmentation, Nagahara2021,nagahara2018maximization,  zhang2017carrying,zhang2015effects}.  In a work by Lou \cite{lou2006effects} on the diffusive logistic model, it was observed that the total biomass of the single species may exceed the total carrying capacity, which has motivated a series of related works \cite{bai2016optimization, deangelis2016dispersal,  heo2021fragmentation, inoue2021unboundedness, liang2012dependence,zhang2017carrying, zhang2015effects}.  In particular, the ratio  of total biomass and total resources is bounded in a one dimensional reaction-diffusion model \cite{bai2016optimization} and unbounded when the spatial dimension is greater than one \cite{inoue2021unboundedness}.  The distribution of resources realizing the total biomass has been shown to be of the bang-bang type \cite{ding2010optimal,  mazari2020optimal, mazari2022optimisation,mazari2021fragmentation, nagahara2018maximization}. 
Maximizing the total biomass for a patch model with logistic growth and  random movement has also been considered \cite{liang2021optimal,Nagahara2021}.
However, we note that in most of these studies, the population growth rate in each patch is assumed to be proportional to the carrying capacity, unlike the  model considered in this paper in which we choose to fix the carrying capacity while varying patch growth rates.

Maximizing the population growth rate for single species 
reaction-diffusion logistic models has also been studied in the literature \cite{cantrell1989diffusive, cantrell1991diffusive, lamboley2016properties,lou2006minimization}, and  the solution was also found to be of bang-bang type. 
Recent studies considered more general reaction-diffusion-advection models \cite{liu2021asymptotics, mazari2022shape}.  We are not aware of any parallel studies in patch logistic models. A closely related work \cite{arino2019number} studied the minimal number of patches with positive growth rate needed to achieve a positive growth rate for the metapopulation model. More general studies have considered the relationship between the dispersal rate and the metapopulation growth rate in a patchy environment  \cite{altenberg2012resolvent,chen2022two, karlin1982classifications, kirkland2006evolution}. A key finding of these studies is that, in a heterogeneous environment with a single dispersal mechanism, the metapopulation growth rate is a decreasing function of the propensity to disperse. 

 The following biological insights are highlighted by our  studies on metapopulation models over stream networks:
\begin{enumerate}
 \item[(i)] {\it Concentration of resources in one of the most downstream patches tends to increase the metapopulation growth rate in stream networks.} We use perturbation arguments to study two cases: (1) small diffusion rates; (2) uniformly distributed resources.  For both cases, we provide evidences to suggest that increasing resources in the downstream patches yields the largest increase in growth rate (see Theorems \ref{theorem growth rate}, \ref{theorem growth rate 2}, and \ref{theorem:growth_n}).

\item[(ii)]  {\it Concentration of resources in the most upstream patches tends to increase the population biomass in stream networks.} We use a sign pattern argument to rigorously prove that the total biomass is maximized for stream networks of three patches when the resources are concentrated in the most upstream patches (see Theorem \ref{theorem:BM}).  This result is generalized to arbitrary stream networks using   monotone dynamical system arguments (see Theorem \ref{theorem:biomass_n}). 

\item[(iii)]{\it  A larger drift-diffusion ratio could promote population persistence.} We provide some evidences to show that the  drift-diffusion ratio $q/d$ could promote the metapopulation growth rate and biomass. This is very different from the observations in \cite{ chen2022invasion,chen2021}, which state that  the species with a smaller drift rate $q$ or a larger diffusion rate $d$ wins the competition in a two-species Lotka-Volterra competition model over stream networks. 
\end{enumerate}

The paper is organized as follows.  In Section \ref{section-meta}, we define the logistic stream network model under consideration and revisit recent theoretical results for species in spatially heterogeneous environments. We then focus on the persistence problem of a single species in stream networks in Sections \ref{section-three}
and \ref{section-biomass}. In particular, we examine the question of maximizing the metapopulation growth rate and network biomass, respectively, in stream networks consisting of three nodes. In Section \ref{section-n}, we extend our results to  a general stream network of $n$ nodes. Finally, in Section \ref{section-conclusion} we summarize our results and discuss possible extensions of this work.

\section{Model formulation and preliminary results}
\label{section-meta}

In this section, we revisit the metapopulation logistic model  and propose the questions on maximizing the metapopulation growth rate and total biomass for stream networks. Let $n$ be a positive integer representing the fixed number of patches (or nodes) in a heterogeneous environment and $u_i=u_i(t), 1\le i\le n$, denote the population scale (size or density) of a certain species of study in patch $i$ at time $t\ge 0$.  Assume that at each patch $i$, the population follows logistic growth with  intrinsic growth rate $r_i$ and  environmental carrying capacity $K_i>0$. That is, when in isolation (i.e., without population movement between patches), the population satisfies the following system 
\begin{equation}\label{eq-logistic}
u_i' =\frac{du_i}{dt}= r_iu_i\Big(1-\frac{u_i}{K_i}\Big), \quad i=1,\ldots,n.
\end{equation}

Dispersal links local populations in patches together to form a metapopulation. Let $\ell_{ij}\ge 0$ denote the movement rate of the individuals from patch $j$ to patch $i$, for $1\le i,j\le n$ and $i\not= j$. 
We always assume $\ell_{ii}=0$ for all $i$. The system describing the dynamics of this metapopulation takes the following form:
\begin{equation}\label{eq-system}
u_i' = r_iu_i\Big(1-\frac{u_i}{K_i}\Big) + \sum_{j=1}^n \big(\ell_{ij} u_j - \ell_{ji}u_i\big), \quad i=1,\ldots,n.
\end{equation} 
The first term in the sum above, $\sum_{j} \ell_{ij} u_j$, tracks all incoming movements (flux in) to patch $i$ while the second term, $\sum_j \ell_{ji}u_i$, sums all outgoing movements (flux out) departing from patch $i$.

All movement coefficients in \eqref{eq-system} can be associated with a \textit{movement network} $G$.  Mathematically, $G$ is a weighted, directed graph (digraph) which consists of  $n$ nodes (each node $i$ in $G$ corresponds to patch $i$ in the heterogeneous environment). In $G$, there is a directed edge (arc) from node $j$ to node $i$ if and only if $\ell_{ij}>0$ and in addition, we assign $\ell_{ij}$ as the weight of the arc. Thus we also denote such a movement network as $(G, L)$, where the $n\times n$ \textit{connection matrix} $L$, whose off-diagonal entries are $\ell_{ij}$ and diagonal entries are $-\sum_j \ell_{ji}$, is as follows:

\begin{equation}\label{eq-L}
L:=\begin{pmatrix}
-\sum_j \ell_{j1} & \ell_{12} & \cdots & \ell_{1n}\\
\ell_{21} & -\sum_j \ell_{j2} & \cdots & \ell_{2n}\\
\vdots & \vdots & \ddots & \vdots\\
\ell_{n1} & \ell_{n2} & \cdots & -\sum_j \ell_{jn}
\end{pmatrix}.
\end{equation}
Throughout this paper, we assume that the movement network $G$ is strongly connected, i.e.,  $L$ is irreducible.  It is easy to see that $(1,1, \dots, 1)$ is a left eigenvector of $L$ corresponding to eigenvalue $0$. By the Perron-Frobenius Theorem, 0 is a simple eigenvalue of $L$ corresponding with a positive eigenvector.

The \textit{metapopulation growth rate} $\rho$ of \eqref{eq-system}, determining the metapopulation \linebreak growth rate when the population is small, is defined as the spectral bound of the Jacobian matrix $J$ for the linearization of \eqref{eq-system} at the trivial equilibrium $(u_1,u_2,\ldots,u_n)=(0,0,\ldots,0)$. That is, 
\begin{equation}\label{eq-r}
\rho :=\max\Big\{\mathrm{Re} \lambda : \; \lambda \ \mbox{is an eigenvalue of } J=L+R \Big\}, 
\end{equation} 
where $R=\mathrm{diag}\{r_i\}$ and $L$ is the connection matrix \eqref{eq-L}. An upper bound and a lower bound for the metapopulation growth rate $\rho$ are established in \cite{chen2022two}:

\begin{proposition}[\cite{chen2022two}]\label{proposition spectral bounds}
Let $L$ as defined in \eqref{eq-L} be irreducible. Suppose  $\bm{r}=(r_1, \dots, r_n)\ge  (\neq) \bm 0$.
The metapopulation growth rate $\rho$ as defined in \eqref{eq-r} has the following bounds:
\begin{equation}
\sum_{i=1}^n \theta_i r_i \le \rho \le \max_i \{r_i\}, 
\end{equation}
where $(\theta_1, \theta_2,\ldots,\theta_n)^\top$ is the  positive eigenvector of $L$ corresponding to eigenvalue 0 with $\sum_{i=1}^n\theta_i=1$. 
\label{thm-chen}
\end{proposition}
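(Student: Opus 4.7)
My plan is to prove the two bounds separately, invoking distinct features of $L$ and $J := L + R$.

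\textbf{Upper bound.} Since $J$ is an irreducible Metzler matrix, Perron--Frobenius theory supplies a positive right eigenvector $u>0$ with $Ju = \rho u$. Construction \eqref{eq-L} forces every column of $L$ to sum to zero, so $\mathbf{1}^\top L = 0$ where $\mathbf{1}=(1,\dots,1)^\top$. Multiplying the eigenequation on the left by $\mathbf{1}^\top$ gives $\mathbf{1}^\top R u = \rho\,\mathbf{1}^\top u$, that is,
$$\rho \;=\; \sum_{i=1}^n \frac{u_i}{\sum_j u_j}\, r_i,$$
which displays $\rho$ as a convex combination of the entries of $\bm r$ and so yields $\rho \le \max_i r_i$.

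\textbf{Lower bound.} Here I would exploit convexity of the map $\bm r \mapsto \rho(\bm r) := s(L + \mathrm{diag}(\bm r))$ together with first-order perturbation theory at $\bm r = 0$. Convexity of $\rho(\bm r)$ is a classical fact: writing $e^{(L + \mathrm{diag}(\bm r))t}$ via the Trotter product formula exhibits each entry as a pointwise limit of sums of products of the $(e^{Lt/N})_{ij}$ (independent of $\bm r$) with the diagonal factors $e^{r_k t/N}$ (log-linear in $\bm r$), hence log-convex in $\bm r$; taking $\tfrac{1}{t}\log(\cdot)$ and letting $t\to\infty$ recovers $\rho(\bm r)$ as a pointwise limit of convex functions. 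At $\bm r = 0$ the matrix $L$ has the simple eigenvalue $0$ with right eigenvector $\theta$ and left eigenvector $\mathbf{1}^\top$, so the standard perturbation formula for simple eigenvalues yields
$$\left.\frac{\partial \rho}{\partial r_k}\right|_{\bm r=0} \;=\; \frac{\mathbf{1}^\top e_k e_k^\top \theta}{\mathbf{1}^\top\theta} \;=\; \theta_k,$$
using the normalization $\sum_i\theta_i = 1$. The subgradient inequality for the convex function $\rho$ then delivers
$$\rho(\bm r) \;\ge\; \rho(0) + \nabla\rho(0)\cdot\bm r \;=\; \sum_{i=1}^n \theta_i r_i.$$

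The main obstacle is justifying convexity of $\rho(\bm r)$ cleanly; the Trotter--log-convexity route is the shortest linear-algebraic one but is not quite elementary. A clean alternative is probabilistic: setting $Q := L^\top$ (a Markov generator whose stationary distribution is $\theta$) and applying Feynman--Kac, one has $\rho = \lim_{t\to\infty}\tfrac{1}{t}\log \mathbb{E}_\theta\!\left[\exp\!\bigl(\int_0^t r(X_s)\,ds\bigr)\right]$, and Jensen's inequality applied to $\exp$ immediately yields $\log \mathbb{E}_\theta[\,\cdot\,]\ge t\sum_i\theta_i r_i$, recovering $\rho \ge \sum_i\theta_i r_i$ in one stroke. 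Either route closes the argument; the remainder is routine.
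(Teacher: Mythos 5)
Your proof is correct, but it takes a genuinely different route from the paper. The paper does not prove the proposition directly: it quotes it from \cite{chen2022two} and, in the accompanying remark, derives both bounds from a single stronger fact — that $\mu\mapsto s(\mu L+R)$ is constant or strictly decreasing on $(0,\infty)$, with limit $\max_i\{r_i\}$ as $\mu\to 0$ and limit $\sum_i\theta_i r_i$ as $\mu\to\infty$ — so that the case $\mu=1$ is sandwiched between the two limits. You instead prove the two bounds separately and self-containedly. Your upper bound is the cleanest part: pairing the right Perron vector of $J=L+R$ with the left null vector $\mathbf{1}^\top$ of $L$ exhibits $\rho$ as a convex combination of the $r_i$ (and incidentally also gives $\rho\ge\min_i r_i$, which the paper does not record). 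Your lower bound rests on Cohen's classical theorem that $\bm r\mapsto s(L+\mathrm{diag}(\bm r))$ is convex, combined with the first-order perturbation formula at $\bm r=0$; both the Trotter/log-convexity justification and the Feynman--Kac/Jensen alternative are standard and sound, and the differentiability needed for the subgradient inequality is guaranteed because $0$ is a simple eigenvalue of the irreducible $L$. What each approach buys: the paper's route yields the full monotonicity-in-dispersal picture (which it then uses in Remark 2.2 and Section 5.4 to interpret the bounds as small- and large-dispersal limits), at the cost of importing the Karlin-type machinery of \cite{chen2022two}; your route is shorter and self-contained for the stated inequality, at the cost of invoking convexity of the spectral bound in the diagonal, which is classical but not elementary. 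Either way the statement is fully established.
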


\begin{remark} In fact, the result from \cite{chen2022two} is more general than Proposition \ref{proposition spectral bounds}. The Jacobian matrix of the model in \cite{chen2022two} has the form$\mu L+R$, where $\mu$ is a positive coefficient. 
Let $s(\mu L+R)$ denote the spectral bound of a matrix $\mu L+R$. By \cite{chen2022two}, the metapopulation growth rate, given by $s(\mu L+R)$, is either a constant or strictly decreasing in $\mu>0$ on $(0, \infty)$  with
$$
\lim_{\mu\to 0}s(\mu L+R)= \max_i \{r_i\} \ \ \text{and} \ \  \lim_{\mu\to \infty}s(\mu L+R)= \sum_{i=1}^n \theta_i r_i.
$$
Proposition \ref{proposition spectral bounds} is a special case of this result, where we take $\mu = 1$. 
\end{remark}

The global dynamics of system \eqref{eq-system} is well-known:
\begin{proposition}[\cite{cosner1996variability,li2010global,Lu1993}]\label{propG}
Let $L$ as defined in \eqref{eq-L} be irreducible. Suppose that $\bm{r}=(r_1, \dots, r_n)\ge  (\neq) \bm 0$ and $K_i>0$ for all $1\le i\le n$.  Then  system \eqref{eq-system} has a unique positive equilibrium which is globally asymptotically stable with respect to all nonnegative nontrivial initial data. 
\end{proposition}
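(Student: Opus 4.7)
The plan is to recognize \eqref{eq-system} as a dissipative, strongly monotone, strictly subhomogeneous semiflow on $\mathbb{R}^n_+$ and then invoke the classical threshold theorem for such semiflows. First I would verify forward invariance of the nonnegative cone: on the face $\{u_i = 0\}$, the vector field satisfies $u_i' = \sum_j \ell_{ij} u_j \ge 0$. For dissipativity, using the left Perron eigenvector $\mathbf{v} > 0$ of $L+R$ (whose Perron eigenvalue is $\rho > 0$ by Proposition~\ref{proposition spectral bounds}), the weighted sum $W(t) := \mathbf{v}^\top u(t)$ satisfies
\begin{equation*}
W'(t) \;=\; \rho\, W(t) - \sum_i \frac{v_i r_i}{K_i}\, u_i^2(t).
\end{equation*}
A Gronwall-type comparison along the irreducible graph $L$ propagates the resulting bounds from the patches with $r_i > 0$ to those with $r_i = 0$, producing an absorbing box $[0,M]^n$ and hence relatively compact orbits.

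Next I would establish the two structural hypotheses. Cooperativity is immediate from $\partial f_i / \partial u_j = \ell_{ij} \ge 0$ for $j \ne i$, and irreducibility of $L$ upgrades this to \emph{strong} monotonicity of the semiflow via Hirsch's theorem. A short direct computation gives, for $\lambda \in (0,1)$ and $u \ge 0$,
\begin{equation*}
f_i(\lambda u) - \lambda f_i(u) \;=\; \lambda(1-\lambda)\, r_i u_i^2 / K_i \;\ge\; 0,
\end{equation*}
with strict inequality in at least one component whenever $u > 0$ (using $\bm{r} \ne \bm{0}$), so $f$ is strictly subhomogeneous. Linear instability of the trivial equilibrium is automatic from $\rho > 0$.

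With these ingredients in hand, the standard Dancer--Hess / Krein--Rutman threshold theorem for strictly subhomogeneous, strongly monotone, dissipative semiflows applies: the monotone iterations started at $\varepsilon \mathbf{v}$ and at the constant super-equilibrium $(M,\ldots,M)$ converge to a minimal and a maximal positive equilibrium, respectively; strict subhomogeneity forces these two equilibria to coincide (if $u^* \lneq \tilde{u}^*$ were distinct, choosing the largest $\lambda \in (0,1]$ with $\lambda \tilde{u}^* \le u^*$ would produce $f(\lambda \tilde{u}^*) \gneq \lambda f(\tilde{u}^*) = 0$, contradicting that $u^*$ is an equilibrium dominating $\lambda \tilde{u}^*$). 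Any nontrivial nonnegative orbit is then squeezed between two monotone orbits converging to this common limit, yielding global asymptotic stability. This recovers the results of \cite{cosner1996variability,li2010global,Lu1993}. Alternatively, and closer to the approach of \cite{li2010global}, one can bypass the monotone-systems machinery by building a graph-theoretic Lyapunov function $V(u) = \sum_i c_i \bigl( u_i - u_i^* - u_i^* \ln(u_i / u_i^*) \bigr)$ with the coefficients $c_i$ taken as the Kirchhoff spanning-tree weights of $(G,L)$, and concluding via LaSalle's invariance principle.

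The most delicate step I anticipate is the dissipativity argument when some $r_i = 0$: the Perron-eigenvector identity above only controls $W$ up to exponential growth by itself, and one must combine the quadratic dissipation available on the patches with $r_i > 0$ with a finite-time propagation argument through the irreducible dispersal network to confine the remaining coordinates. The graph-Lyapunov route sidesteps this issue by producing a one-shot decay estimate, at the cost of computing the spanning-tree weights explicitly.
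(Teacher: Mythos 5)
The paper does not actually prove this proposition; it is imported verbatim from \cite{cosner1996variability,li2010global,Lu1993}, so your proposal can only be measured against the standard arguments in those references and against the closely related machinery the paper deploys later (the proof of Theorem \ref{theorem:biomass_n}). Your overall architecture --- forward invariance, strong monotonicity from cooperativity plus irreducibility of $L$, strict subhomogeneity of the logistic nonlinearity, instability of $\bm 0$ from $\rho>0$ (which does follow from Proposition \ref{proposition spectral bounds} since $\sum_i\theta_i r_i>0$), and the Dancer--Hess threshold theorem --- is the right one and is essentially the route of \cite{Lu1993,cosner1996variability}. The uniqueness argument via the maximal $\lambda$ with $\lambda\tilde u^*\le u^*$ is standard and correct, though as written it is slightly too quick: $f(\lambda\tilde u^*)\gneq 0$ does not by itself contradict $\lambda\tilde u^*\le u^*$; one must run the flow from the strict subsolution and use strong monotonicity to push past $\lambda$, and one should phrase the argument for a pair of positive equilibria that need not be ordered.

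The genuine gap is the dissipativity step, and you have correctly diagnosed it yourself without repairing it. The identity $W'=\rho W-\sum_i v_ir_iu_i^2/K_i$ gives no control at all on the coordinates with $r_i=0$: if the population concentrates on those patches, the quadratic sink is inactive and the inequality permits growth at rate $\rho>0$. The promised ``Gronwall-type comparison propagating bounds through the irreducible graph'' is not an argument --- boundedness of the patches with $r_i>0$ is precisely what is not yet known, and a finite-time propagation estimate would have to be set up carefully. The clean fix, which is also what the paper itself uses in the proof of Theorem \ref{theorem:biomass_n}, is to discard the left eigenvector of $L+R$ and instead take the positive right null vector $\bm v$ of $L$ (which exists by Perron--Frobenius since $L$ is irreducible with zero column sums). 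For $c\ge\max_i K_i/v_i$ one has $f_i(c\bm v)=r_i\,cv_i\left(1-cv_i/K_i\right)\le 0$ for every $i$, including those with $r_i=0$, so $c\bm v$ is a supersolution for every large $c$; monotone iteration downward from $c\bm v$ and upward from $\varepsilon\bm w$ (with $\bm w$ the right Perron vector of $L+R$, a strict subsolution since $\rho>0$) then yields existence, boundedness of all orbits, and the squeezing argument in one stroke, with no separate ``absorbing box'' needed. Finally, note that your alternative graph-Lyapunov route \`a la \cite{li2010global} establishes global stability only \emph{after} a positive equilibrium is known to exist, since $V$ is built around $u^*$; it does not bypass the existence question, so it cannot stand alone either.
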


Let $\mathbf{u}^*=(u_1^*, u_2^*, \ldots, u_n^*)$ be the unique positive equilibrium of \eqref{eq-system}.
In order to quantitatively measure the metapopulation, we define the \textit{network biomass} $\mathcal{K}$ as 
\begin{equation}\label{eq-K}
\mathcal{K} := \sum_{i=1}^n u_i^* .
\end{equation}
Notice that when there is only one patch in the network (i.e., $n=1$),  the metapopulation growth rate $\rho$ becomes the intrinsic growth rate and the network biomass $\mathcal{K}$ becomes the environmental carrying capacity. If the species has the same intrinsic growth rate at each patch (i.e., $r_1=r_2=\cdots=r_n=r/n$), then $\rho=r/n$, regardless of the movement network.

Suppose that the intrinsic growth rate in each patch $r_i$ depends in a simple, linear way on some resource and the total amount of resources among all patches are fixed, i.e., $r=\sum_{i=1}^n r_i$ is a positive constant.   Here we interpret a positive $r_i$ to represent additional resources in patch $i$, as $r_i\ge 0$ implies that all patches have inherent resources available since $r_i= 0$ means the patch density would remain constant in the absence of dispersal. What type of allocation of these resources results in a maximized growth rate or biomass of the metapopulation? 

We study this question for stream networks in this paper. As a starting point, we  consider three different configurations of stream networks with three nodes, as depicted in Figure~\ref{fig-3patch}. Configuration (i) is commonly observed in the upper course of freshwater systems, where small streams join up to form a larger one. This configuration can also be used to describe the situation when a tributary (i.e., a freshwater stream) feeds into a larger stream (or river). Thus we call configuration (i) a {\it tributary stream}. Configuration (ii) represents a {\it straight stream}, which is often seen in the middle course of stream/river systems (meanders are common but ignored in our study). Configuration (iii) describes a {\it distributary stream}, which is commonly seen in the lower course of the water system (e.g., near the delta of streams/rivers).

We restrict our study to the special case when all patches have the same carrying capacity but could have different intrinsic growth rates. We assume that the  movement of individuals among patches are subject to diffusion and drift, where $d>0$ represents the diffusion magnitude and $q>0$ represents the drift magnitude (See Figure \ref{fig-3patch}). More precisely, we impose the following assumptions:
\begin{enumerate}[label=\textnormal{(H\arabic*)}]

\item $r_i\ge 0$ for all $i$ and $\sum_i r_i =r>0$ is a fixed constant.\label{hp1}
\item For all $i$, $K_i=K>0$.\label{hp2}
\item The connection matrix $L$ is irreducible. Moreover, all upstream movement coefficients are given by $d$ and downstream movement coefficients are $d+q$.\label{hp3}
\end{enumerate}

We also define the {\it drift-diffusion ratio} of a stream as $q/d$. Later we show that an increase in this drift-diffusion ratio could promote the network biomass if source patches are located upstream.

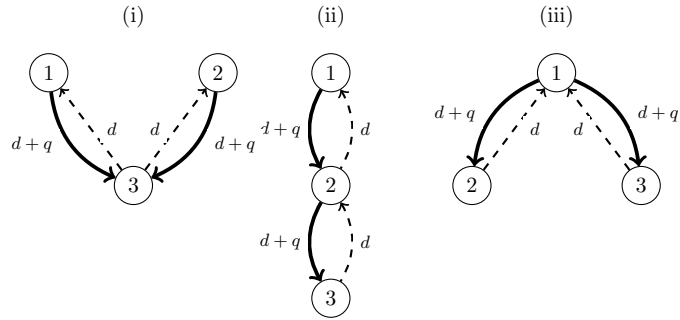
\begin{figure}[htbp]
\centering
\begin{tikzpicture}[scale=0.75, transform shape]
\begin{scope}[every node/.style={draw}, node distance= 1.5 cm]

    \node[draw=white] (i) at (-3.5,1) {(i)};
    \node[draw=white] (ii) at (0,1) {(ii)};
    \node[draw=white] (iii) at (4,1) {(iii)};

    \node[circle] (1) at (0,0) {$1$};
    \node[circle] (2) at (0,-2) {$2$};
    \node[circle] (3) at (0,-4) {$3$};

    \node[circle] (4) at (4,0) {$1$};
    \node[circle] (5) at (2.5,-2) {$2$};
    \node[circle] (6) at (5.5,-2) {$3$};

    \node[circle] (7) at (-5,0) {$1$};
    \node[circle] (8) at (-2,0) {$2$};
    \node[circle] (9) at (-3.5,-2) {$3$};
\end{scope}
\begin{scope}[every node/.style={fill=white},
              every edge/.style={thick}]

    \draw[line width=0.5mm] [->](1) to [bend right] node[left=0.1] {{\footnotesize $d+q$}} (2);
    \draw[line width=0.5mm] [->](2) to [bend right] node[left=0.1] {{\footnotesize $d+q$}} (3);
    \draw[thick, dashed] [<-](1) to [bend left] node[right=0.1] {{\footnotesize $d$}} (2);
    \draw[thick, dashed] [<-](2) to [bend left] node[right=0.1] {{\footnotesize $d$}} (3);

    \draw[line width=0.5mm] [->](4) to [bend right] node[left=5] {{\footnotesize $d+q$}} (5);
    \draw[thick, dashed] [->](5) to node[right=4] {{\footnotesize $d$}} (4);
    \draw[line width=0.5mm] [->](4) to [bend left] node[right=5] {{\footnotesize $d+q$}} (6);
    \draw[thick, dashed] [->](6) to  node[left=4] {{\footnotesize $d$}} (4);

    \draw[line width=0.5mm] [->](7) to [bend right] node[left=5] {{\footnotesize $d+q$}} (9);
    \draw[thick, dashed] [->](9) to node[right=4] {{\footnotesize $d$}} (7);  
    \draw[line width=0.5mm] [->](8) to [bend left] node[right=5] {{\footnotesize $d+q$}} (9);
    \draw[thick, dashed] [->](9) to node[left=4] {{\footnotesize $d$}} (8);
\end{scope}
\end{tikzpicture}
\caption{Stream networks with three nodes: (i) a tributary stream with 1,2 being upstream nodes and 3 being a downstream node; (ii) a straight stream with 1 being an upstream node, 2 being a middle node and 3 being a downstream node; (iii) a distributary stream with 1 being an upstream node and 2,3 being downstream nodes. Here the solid edges represent a larger movement from the upstream to the downstream and the dashed edges represent movement from the downstream to the upstream. The individuals are assumed to be subject to a diffusion rate $d$ and a drift rate $q$.
}\label{fig-3patch}
\vskip -20pt
\end{figure}

\section{Maximizing the metapopulation growth rate for stream networks of three patches}\label{section-growthrate}
\label{section-three}
In this section, we study the configuration of $\bm r=(r_1, r_2, r_3)$ to maximize the metapopulation growth rate $\rho$. We first compute the bounds of  $\rho$ for the three patch networks in Figure~\ref{fig-3patch}. These three networks describe all possible homogeneous flow stream networks, as defined later in Section \ref{section-n}, on three nodes.

(i) The tributary stream of three patches in Figure~\ref{fig-3patch} has  connection matrix
$$
L=\begin{bmatrix}
  -d-q & 0 & d \\
  0 & -d-q & d \\
  d+q & d+q & -2d \\
 \end{bmatrix}.
 $$
It follows from Proposition~\ref{thm-chen} that the metapopulation growth rate $\rho$ for \eqref{eq-system} on the tributary stream is bounded as follows:
$$\frac{dr_1 + dr_2 + (d+q)r_3}{3d+q} \le \rho\le \max_i \{r_i\},$$
since $(\frac{d}{3d+q}, \frac{d}{3d+q}, \frac{d+q}{3d+q})$ is the normalized eigenvector corresponding to the eigenvalue 0 of $L$. Since $r_1+r_2+r_3=r$ and $r_i\ge 0$ (i.e., assumption (H1)), the above inequalities can be rewritten as
$$
\frac{1}{3+\frac{q}{d}}r+\frac{\frac{q}{d}}{3+\frac{q}{d}}r_3 \le \rho\le \max_i \{r_i\}\le r.
$$

(ii) The straight stream of three patches in Figure~\ref{fig-3patch}  has connection matrix
$$ 
L=\begin{bmatrix}
  -(d+q) & d & 0 \\
  d+q & -2d-q & d \\
  0 & d+q & -d \\
\end{bmatrix}.
$$
By Proposition~\ref{thm-chen}, the metapopulation growth rate  has the following bounds:
$$
\frac{1}{3+3\frac{q}{d}+\frac{q^2}{d^2}}r+\frac{\frac{q}{d}}{3+3\frac{q}{d}+\frac{q^2}{d^2}}r_2+\frac{2\frac{q}{d}+\frac{q^2}{d^2}}{3+3\frac{q}{d}+\frac{q^2}{d^2}}r_3\le \rho \le \max_i \{r_i\}\le r.
$$

(iii) The distributary stream  in Figure~\ref{fig-3patch} has connection matrix
$$ 
L=\begin{bmatrix}
  -2d-2q & d & d \\
  d+q & -d & 0 \\
  d+q & 0 & -d \\ 
\end{bmatrix}.
$$
By Proposition~\ref{thm-chen}, the metapopulation growth rate $\rho$ satisfies 
$$
\frac{1}{3+2\frac{q}{d}}r + \frac{\frac{q}{d}}{3+2\frac{q}{d}}r_2 + \frac{\frac{q}{d}}{3+2\frac{q}{d}}r_3\le \rho \le \max_i \{r_i\}\le r.
$$

\begin{remark}
The connection matrix $L$ for each of the three cases can be written as $L=dD+qQ$, where $D$ and $Q$ represent the diffusion and advective movement patterns, respectively.  
\end{remark}

Notice that the lower bounds for $\rho$ are maximized if all the resources are concentrated at the downstream ends. By Remark 2.2, this lower bound is achieved when $d$ and $q$ approach infinity while maintaining a constant ratio. We conjecture that $\rho$ is maximized for the tributary and straight stream if $\bm r=(0, 0, r)$ and for the distributary stream if $\bm r=(0, r, 0)$ or $\bm r=(0, 0, r)$. Next, we provide two pieces of evidence to support this conjecture.   We first verify that the conjecture holds when diffusion is sufficiently small. We then consider the case where the system is perturbed away from uniformly distributed resources.  In what follows, we use the notation  $\rho(d, q, \bm r)$ to emphasize the dependence of $\rho$ on $d, q$ and $\bm r$.

\subsection{{ Small diffusion, $d\gtrapprox 0$}}  First suppose $d=0$. Then we can easily see:
\begin{itemize}

          \item [(i)] For a tributary stream, $\rho(0, q, \bm r)=\max\{r_1-q, r_2-q, r_3\}$. Therefore for any $q>0$,  the maximum of $\rho(0, q, \bm r)$ is achieved when $\bm r=(0, 0, r)$.
          
    \item[(ii)] For a straight stream, $\rho(0, q, \bm r)=\max\{r_1-q, r_2-q, r_3\}$. Therefore for any $q>0$,  the maximum of $\rho(0, q, \bm r)$ is achieved when $\bm r=(0, 0, r)$.
    
       \item [(iii)] For a distributive stream, $\rho(0, q, \bm r)=\max\{r_1-2q, r_2, r_3\}$. Therefore for any $q>0$,  the maximum of $\rho(0, q, \bm r)$ is achieved when $\bm r=(0, r, 0)$ or $\bm r=(0, 0, r)$.

\end{itemize}
Thus, when $d=0$, the metapopulation growth rate is maximized when all the resources are concentrated in a single downstream patch.
We can extend this result to hold for $d\gtrapprox 0$.

\begin{theorem}\label{theorem growth rate}
Suppose that assumptions (H1) and (H3) hold for system \eqref{eq-system}. Let $q, r>0$ be fixed. Then the following statements hold:
\begin{enumerate}
\item[\rm (i)] There exists $d^*>0$ such that the maximum metapopulation growth rate $\rho$ of the tributary or straight stream  in Figure \ref{fig-3patch} is attained when $(r_1, r_2, r_3)=(0, 0, r)$ for all $0<d\le d^*$;
\item[\rm (ii)] There exists $d^{**}>0$ such that the maximum metapopulation growth rate $\rho$ of the distributary  stream  in Figure \ref{fig-3patch} is attained when $(r_1, r_2, r_3)=(0, r, 0)$ or $(r_1, r_2, r_3)=(0, 0, r)$ for all $0<d\le d^{**}$.
\end{enumerate}
\end{theorem}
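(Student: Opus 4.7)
The plan is to reduce the infinite-dimensional maximization of $\rho$ over the resource simplex $\Delta_r := \{\bm r \in \mathbb{R}^3_{\ge 0} : r_1+r_2+r_3=r\}$ to a finite comparison at its three vertices via convexity, and then lift the $d=0$ comparison already recorded in the paragraphs above to small $d>0$ by continuity.

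The crux is to show that for each fixed $(d,q)$ the map $\bm r \mapsto \rho(d,q,\bm r) = s(L+\mathrm{diag}(\bm r))$ is convex on $\Delta_r$. This is the classical convexity of the spectral bound of a Metzler matrix in its diagonal entries. A convex function on a simplex attains its maximum at a vertex, so the maximizer of $\rho(d,q,\cdot)$ on $\Delta_r$ lies in $\{(r,0,0),(0,r,0),(0,0,r)\}$ for every $(d,q)$, reducing the problem to comparing three scalar values.

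The remaining step is the vertex comparison at $d=0$, immediate from the formulas already computed: for the tributary or straight stream $\rho(0,q,(0,0,r))=r$ while $\rho(0,q,(r,0,0))=\rho(0,q,(0,r,0))=\max\{r-q,0\}<r$; for the distributary $\rho(0,q,(0,r,0))=\rho(0,q,(0,0,r))=r$ (the latter equality forced by the obvious symmetry of the network under swapping nodes $2$ and $3$), while $\rho(0,q,(r,0,0))=\max\{r-2q,0\}<r$. Since the spectral bound depends continuously on matrix entries, each of these three vertex values depends continuously on $d$, and so the strict inequalities above persist on an interval $0<d\le d^*$ (respectively $0<d\le d^{**}$). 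Combined with the vertex attainment from convexity, this establishes both statements of the theorem.

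The main obstacle I expect is the convexity step: the standard Collatz--Wielandt identities realize $s(L+\mathrm{diag}(\bm r))$ as a supremum of concave or an infimum of convex functions of $\bm r$ and therefore do not yield convexity directly. I would either cite Cohen's 1981 theorem as a black box, or derive it from a Feynman--Kac representation: if $X_t$ denotes the continuous-time Markov chain with generator $L^\top$, then $\bigl(e^{t(L^\top+\mathrm{diag}(\bm r))}\mathbf{1}\bigr)_i = \mathbb{E}_i\bigl[\exp\!\int_0^t r_{X_s}\,ds\bigr]$ is the moment generating function of the occupation-time vector in $\bm r$, hence log-convex in $\bm r$; since $\rho = s(L+\mathrm{diag}(\bm r)) = s(L^\top+\mathrm{diag}(\bm r)) = \lim_{t\to\infty} t^{-1}\log\|e^{t(L^\top+\mathrm{diag}(\bm r))}\mathbf{1}\|_\infty$, it inherits convexity. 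Everything else---the lower-triangular eigenvalue computations at $d=0$, the distributary symmetry, and continuity of the spectral bound in $d$---is routine.
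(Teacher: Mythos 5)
Your proof is correct, but it takes a genuinely different route from the paper's. The paper argues locally: it partitions the simplex $S$ into a region where $\rho(0,q,\bm r)$ is bounded away from $r$ (so, by continuity, no maximizer can lie there for small $d$) and a complementary region on which $\rho(0,q,\bm r)=r_3$ is a \emph{simple} eigenvalue; there it uses joint analyticity of $\rho$ in $(d,\bm r)$ and the fact that the directional derivatives along $(-1,0,1)$ and $(0,-1,1)$ equal $1$ at $d=0$, hence stay positive uniformly for small $d$, pushing the maximum to the vertex. You replace all of this with the global fact (Cohen, 1981) that $\bm r\mapsto s(L+\mathrm{diag}(\bm r))$ is convex for an essentially nonnegative $L$, so for \emph{every} $(d,q)$ the maximum over the simplex sits at one of the three vertices; the problem collapses to comparing three continuous functions of $d$, and the strict vertex ranking at $d=0$ (read off from the triangular structure of $L$ there) persists for small $d$, with the distributary tie between $(0,r,0)$ and $(0,0,r)$ forced by the $2\leftrightarrow 3$ symmetry of $L$. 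Your route is shorter, avoids the uniform-analyticity step, and yields the stronger structural conclusion that the optimizer is concentrated in a single patch for \emph{all} $d$, not just small $d$; its only cost is the appeal to Cohen's convexity theorem, which is external to the paper but standard, and your Feynman--Kac sketch (log-convexity of the occupation-time moment generating function for the chain generated by $L^\top$, together with $s(A)=\lim_{t\to\infty}t^{-1}\log\|e^{tA}\mathbf 1\|_\infty$) is a legitimate self-contained substitute --- you are right that the Collatz--Wielandt formulas alone do not deliver convexity. What the paper's local argument buys that yours does not is strict monotonicity of $\rho$ toward the downstream vertex on the relevant region, which excludes other maximizers; since the theorem does not claim uniqueness, nothing is lost.
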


\begin{proof}
We only prove (i) here. The proof of (ii) is similar and is provided in the appendix.  Let $\rho(d, q, \bm r )$  be the metapopulation growth rate. Denote $S=\{(r_1, r_2, r_3)\in\mathbb{R}^3_+: r_1+r_2+r_3=r\}$, $S_1=\{(r_1, r_2, r_3)\in S: r_1\ge q/2 \ \ \text{or}\ \ r_2\ge q/2\}$, $S_2=\overline{S\backslash S_1}$, and $\hat S_2=\{(r_1, r_2, r_3)\in S: r_1<q \ \ \text{and}\ \ r_2< q\}$.

If $\bm r\in S_1$, then $r_1-q, r_2-q\le r-q$ and $r_3=r-r_1-r_2\le r-q/2$. Hence, we have 
$$
\rho(0, q, \bm r)=\max\{r_1-q, r_2-q, r_3\}\le r-q/2.
$$
By the continuity of $\rho$ \cite{zedek1965continuity}, there exists $d_1>0$ such that 
\begin{equation}\label{ess1}
\rho(d, q, \bm r)<r-q/4
\end{equation}
for all $d\in [0, d_1]$ and $\bm r\in S_1$. Since $\rho(0, q, (0, 0, r))=r$, there exists $d_2<d_1$ such that 
\begin{equation}\label{ess2}
\rho(d, q, (0, 0, r))>r-q/4
\end{equation} 
for all $d\in [0, d_2]$. By \eqref{ess1}-\eqref{ess2} and $(0, 0, r)\in S_2$, we have 
$$
\max\{\rho(d, q, \bm r): \bm r\in S\}= \max\{\rho(d,q, \bm r ): \bm r\in S_2\}
$$
for all $d\in [0, d_2]$, i.e., the maximum of $\rho$ is attained in $S_2$. 

For any $\bm r\in \hat S_2$, we have $\rho(0, q, \bm r)=r_3$, where $r_3$ is a simple eigenvalue of $dD+qQ+\text{diag}\{r_i\}$. Therefore, there exists $d_3<d_2$ such that $\rho(d, q, \bm r)$ is analytic for $d\in [0, d_3]$ and $\bm r\in S_2$. Hence, the derivatives of $\rho$ are continuous for $d\in [0, d_3]$ and $\bm r\in  S_2$.

Let $h=(-1, 0, 1)$ or $(0, -1, 1)$. It is easy to compute the directional derivatives of $\rho$ with respect to $(r_1, r_2, r_3)$
$$
D_h\rho(0, q, \bm r)=1, \ \text{for any}\ \bm r\in S_2.
$$
By continuity, there exists $d^*<d_3$ such that $D_h\rho(d, q, \bm r)>0$ for any $0\le d\le d^*$ and $\bm r\in S_2$. Hence, the maximum of $\rho(d, q, \bm r)$ is attained at $\bm r=(0, 0, r)$ for any $0\le d\le d^*$. 
\end{proof}

\subsection{Equal patch growth rates, $r_1 = r_2 = r_3= r/3$}
	The second case we consider is when all patches have the same intrinsic growth rate, that is $r_1 = r_2 = r_3= r/3$. Notice that, since the column sums of both $D$ and $Q$ are zero, we have $(1, 1, 1)(dD + qQ) = 0$. It follows that 
	\[(1, 1, 1)\left(d D + q Q + r/3 I \right) = (1, 1, 1) r/3. \]
	Thus, $r/3$ is an eigenvalue of the inherent projection matrix and $(1,1,1)$ is the corresponding left eigenvector. Next we note that since $dD + qQ$ is essentially nonnegative and irreducible, the matrix $d D + q Q + r/3 I$ is as well. Thus, by the Perron-Frobenius theorem, we have $\rho(d, q, \bm r)=r/3$. 	
 
 In Theorem \ref{theorem growth rate 2} we consider how the metapopulation growth rate is affected when the patch growth rates are perturbed away from uniform by examining the impact of increasing the amount of resources in a single patch. Since we wish to keep the total amount of resources fixed at $r$, we assume that an increase in one patch is compensated by a decrease in the other patches. Theorem \ref{theorem growth rate 2} shows that the metapopulation growth rate increases the most if the increase in resources is applied to one of the most downstream nodes.

\begin{theorem}\label{theorem growth rate 2}
Suppose that assumptions (H1) and (H3) hold for system \eqref{eq-system}. Let $q, r>0$ be fixed and let $r_i = r/3$. Consider the perturbation $rI \rightarrow rI + \epsilon E$ where $\epsilon \gtrapprox 0$ and $E$ is a diagonal matrix with diagonal entries $1$, $-\alpha $ and $-\beta $ such that $\alpha, \beta \geq 0$ and $\alpha + \beta = 1$. Then the metapopulation growth rate is largest when matrix entry 1 corresponds to one of the most downstream patches.
\end{theorem}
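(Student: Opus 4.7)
The plan is to apply first-order analytic perturbation theory to the simple eigenvalue $r/3$ of the unperturbed Jacobian $J_0 := dD + qQ + (r/3)I$. As observed in the paragraph preceding the theorem, $r/3$ is indeed a simple eigenvalue of $J_0$: the left eigenvector is $\bm 1 = (1,1,1)^T$ (since the column sums of $L = dD + qQ$ vanish), and the right eigenvector is the positive Perron vector $\bm\theta = (\theta_1,\theta_2,\theta_3)^T$ of $L$, which has already been computed explicitly in Section~\ref{section-three} for each of the three network types. Standard simple-eigenvalue perturbation then yields
\[
\rho(\epsilon) \;=\; \frac{r}{3} \;+\; \epsilon\,\frac{\bm 1^T E\,\bm\theta}{\bm 1^T \bm\theta} \;+\; O(\epsilon^2)
\;=\; \frac{r}{3} \;+\; \epsilon\cdot\frac{E_{11}\theta_1+E_{22}\theta_2+E_{33}\theta_3}{\theta_1+\theta_2+\theta_3} \;+\; O(\epsilon^2).
\]
So for $\epsilon \gtrapprox 0$, comparing $\rho(\epsilon)$ across placements of the diagonal entries $(1,-\alpha,-\beta)$ reduces, at leading order, to comparing the linear functional $\sum_i E_{ii}\theta_i$ over the permutations of those three values.

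Next I would invoke the rearrangement inequality: such a linear functional is maximized over permutations of a fixed coefficient vector exactly when the largest coefficient (here, $1$) is paired with the largest component of $\bm\theta$. Reading off the eigenvector formulas already computed in the section, $\bm\theta \propto (d,\,d,\,d+q)$ for the tributary, $\bm\theta \propto (d^2,\,d(d+q),\,(d+q)^2)$ for the straight stream, and $\bm\theta \propto (d,\,d+q,\,d+q)$ for the distributary. In each case the largest component of $\bm\theta$ sits at a most downstream node — uniquely at node $3$ in the first two configurations, and tied between nodes $2$ and $3$ for the distributary. Because $d,q > 0$ makes each gap $\theta_{\text{downstream}}-\theta_{\text{non-downstream}}$ strictly positive, and because the coefficient $1$ strictly exceeds the nonpositive values $-\alpha,-\beta$, the first-order coefficient of $\epsilon$ is strictly larger when $E_{ii}=1$ at a most downstream patch than at any non-downstream patch. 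This strict inequality at order $\epsilon$ propagates to $\rho(\epsilon)$ itself for all sufficiently small $\epsilon > 0$, giving the claim.

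The only technical point requiring care is the simplicity of $r/3$ as an eigenvalue of $J_0$, which is exactly what legitimizes analytic perturbation and the Rayleigh-style formula for the first derivative. This is immediate from the Perron–Frobenius theorem applied to the irreducible essentially nonnegative matrix $dD + qQ + (r/3)I$, as already noted in the paragraph preceding the theorem. Beyond this I foresee no obstacles: the proof is essentially a one-line first-order expansion followed by a rearrangement-inequality argument, with the explicit eigenvectors from Section~\ref{section-three} supplying the ordering information needed to identify the downstream patches as the maximizers.
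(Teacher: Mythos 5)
Your proposal is correct and follows essentially the same route as the paper: both apply first-order perturbation of the simple Perron eigenvalue $r/3$ with left eigenvector $(1,1,1)$ and the explicit right eigenvector of $dD+qQ$, reducing the question to comparing $\bm 1^\intercal E\,\bm v$ across placements of the entry $1$. The only cosmetic difference is that you conclude via a rearrangement/exchange argument (largest entry of $\bm v$ sits at a most downstream node), whereas the paper writes out the three quantities $\bm w^\intercal E_i \bm v$ explicitly and checks their ordering by hand.
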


\begin{proof}
  Let  $\rho_\epsilon$ denote the metapopulation growth rate  of the perturbed matrix $rI + \epsilon E$.  Following standard perturbation arguments (see for example \cite{kirkland2013group}, Chapter 3), we have 
	\begin{equation}\label{specrtal radius equal rs perturbation}
		\rho_\epsilon \approx  r/3 +\epsilon{\bm w}^\intercal E {\bm v}  + o(\epsilon^2),
	\end{equation}
	where ${\bm v}$ and ${\bm w}^\intercal$ are the right and left Perron eigenvectors of $d D + q Q + r/3 I$, respectively, such that ${\bm w}^\intercal {\bm v} = 1.$ Here we take ${\bm w}^\intercal = (1, 1, 1)$. Notice that the right eigenvector ${\bm v}$ is a solution to the equation $(d D + q Q){\bm v} = {\bf 0}.$ Denote this right eigenvector for the tributary, straight, and distributary streams by ${\bm v}_t,{\bm v}_s,$ and ${\bm v}_d$, respectively. Explicit calculations show
{\small
\begin{align*}
{\bm v}_t = \frac{d}{3d +q}\left(\begin{array}{c} 1\\ 1\\ \frac{d+q}{d} \end{array} \right), \;
{\bm v}_s =  \frac{d^2}{3d^2+3dq+q^2}\left(\begin{array}{c} 1\\ \frac{d+q}{d}\\ \left(\frac{d+q}{d}\right)^2 \end{array} \right), \;
{\bm v}_d = \frac{d}{3d+2q }\left(\begin{array}{c} 1\\ \frac{d+q}{d}\\ \frac{d+q}{d} \end{array} \right) .
\end{align*}
}
Notice that, for all three cases, the largest component of ${\bm v}$ corresponds to the most downstream patch(es).

We first consider the straight stream configuration. Let $E_i$ denote the diagonal perturbation matrix describing an increase in resources in patch $i$, that is, $E_{1} = \text{diag}\{1, -\alpha, -\beta\}$, $E_{2} = \text{diag}\{ -\alpha,1, -\beta\}$, and $E_{3} = \text{diag}\{-\alpha, -\beta, 1\}$. We  find:
\begin{align*}
&  {\bm w}^\intercal E_1 {\bm v}_s = \frac{d^2}{3d^2+3dq+q^2} \left( 1 - \alpha \frac{d+q}{d} - \beta \frac{(d+q)^2}{d^2}  \right)<0, \\
&  {\bm w}^\intercal E_2 {\bm v}_s = \frac{d^2}{3d^2+3dq+q^2} \left( -\alpha   + \frac{d+q}{d} - \beta \frac{(d+q)^2}{d^2}  \right),\\
&  {\bm w}^\intercal E_3 {\bm v}_s = \frac{d^2}{3d^2+3dq+q^2} \left( -\alpha  - \beta \frac{d+q}{d}   + \frac{(d+q)^2}{d^2}  \right) >0.
	\end{align*}
Therefore, increasing the intrinsic growth rate in patch 1 while maintaining the total intrinsic growth rate at $r$ results in a decrease in the metapopulation growth rate. Meanwhile, increasing the intrinsic growth rate in patch 3 increases the metapopulation growth rate. Moreover, the metapopulation growth rate when the intrinsic growth rate is increased in patch 2 lies between these two cases. Thus, we may conclude that shifting  resources from upstream patches to the most downstream patch increases the the metapopulation growth rate {the most}.
In the same manner, we may show that this also holds true for the other two configurations.
\end{proof}

\section{Maximizing the network biomass for stream networks of three patches}\label{section-biomass}

In the following theorem, we summarize our results on maximizing the network biomass associated with stream networks in Figure \ref{fig-3patch}. The proof is provided in the next subsection. 

\begin{theorem}\label{theorem:BM}
Suppose that assumptions (H1), (H2) and (H3) hold for system \eqref{eq-system} on stream networks. Let $\mathcal{K}_t,\mathcal{K}_s,\mathcal{K}_d$ denote the network biomass associated with a tributary stream, a straight stream, and a distributary stream as depicted in Figure \ref{fig-3patch}, respectively. Then the following results hold.
\begin{enumerate}
    \item[\rm(i)] $\mathcal{K}_t \leq (3+\frac{q}{d})K$. The maximum is reached when $(r_1,r_2,r_3)= (r_1,r-r_1,0)$ for any $0\leq r_1\leq r$.
    \item[\rm(ii)] $\mathcal{K}_s \leq (3+3\frac{q}{d}+\frac{q^2}{d^2})K$. The maximum is reached when $(r_1,r_2,r_3)= (r,0,0)$.
    \item[\rm(iii)] $\mathcal{K}_d \leq (3+2\frac{q}{d})K$. The maximum is reached when $(r_1,r_2,r_3)=(r,0,0)$.
\end{enumerate}
\end{theorem}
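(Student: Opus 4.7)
The plan is to prove all three inequalities in a single stroke by constructing, for each configuration, an explicit componentwise upper bound $\bar{\mathbf u}$ for the unique positive equilibrium $\mathbf u^*$; summing its entries will then yield the claimed bound on $\mathcal K=\sum_i u_i^*$. The key ingredient is the positive right null vector $\boldsymbol\phi$ of the connection matrix $L$, which exists and is unique up to scaling by Perron--Frobenius, since $L$ is essentially nonnegative, irreducible, with spectral bound $0$ and left null vector $(1,1,1)$.

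A direct calculation on the three matrices $L$ displayed in Section~\ref{section-three} shows that, scaled so that $\min_i\phi_i=1$, the right null vector of $L$ is
\[
\boldsymbol\phi_t=(1,1,a),\qquad \boldsymbol\phi_s=(1,a,a^2),\qquad \boldsymbol\phi_d=(1,a,a),
\]
where $a:=1+q/d>1$ and the entries equal to $1$ correspond exactly to the most upstream node(s). Setting $\bar{\mathbf u}:=K\boldsymbol\phi$, we then have $L\bar{\mathbf u}=\mathbf 0$ and $\bar u_i\ge K$ for every $i$, with equality precisely at those upstream patches. Substituting into the right-hand side $F_i(\mathbf u):=r_iu_i(1-u_i/K)+(L\mathbf u)_i$ of \eqref{eq-system} yields
\[
F_i(\bar{\mathbf u})=r_i\bar u_i\bigl(1-\bar u_i/K\bigr)\le 0\qquad\text{for every }i,
\]
using $r_i\ge 0$ from \ref{hp1}; so $\bar{\mathbf u}$ is a supersolution. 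Subtracting $F(\mathbf u^*)=\mathbf 0$ and factoring the quadratic gives $M(\mathbf u^*-\bar{\mathbf u})\ge\mathbf 0$, where $M:=L+R-(R/K)\operatorname{diag}(\mathbf u^*+\bar{\mathbf u})$ is essentially nonnegative and irreducible, with column sums $r_j\bigl(1-(u_j^*+\bar u_j)/K\bigr)\le 0$, strictly negative at any $j$ with $r_j>0$ (since $u_j^*+\bar u_j>K$). Perron--Frobenius then forces $s(M)<0$, so $-M$ is a nonsingular $M$-matrix with $(-M)^{-1}\ge 0$, and we conclude $\mathbf u^*\le\bar{\mathbf u}$. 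Alternatively, the same conclusion follows from monotone dynamics: \eqref{eq-system} is cooperative because its off-diagonal Jacobian entries $\ell_{ij}$ are nonnegative, so the trajectory from $\bar{\mathbf u}$ is non-increasing and converges by Proposition~\ref{propG} to $\mathbf u^*$.

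Summing the inequality $\mathbf u^*\le\bar{\mathbf u}$ yields $\mathcal K\le K\sum_i\phi_i$, which evaluates to $(3+q/d)K$, $(3+3q/d+q^2/d^2)K$, and $(3+2q/d)K$ in the tributary, straight, and distributary cases respectively. For the equality characterization, if $\mathcal K=K\sum_i\phi_i$ then $\mathbf u^*=\bar{\mathbf u}$, and then $F(\bar{\mathbf u})=\mathbf 0$ together with $\bar u_i>K$ at every non-upstream node forces $r_i=0$ there: this isolates $(r,0,0)$ in (ii) and (iii), and leaves the full one-parameter family $(r_1,r-r_1,0)$ with $0\le r_1\le r$ in (i); a direct substitution confirms that each of these choices actually produces $\mathbf u^*=\bar{\mathbf u}$, so the bound is attained. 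The main technical point I expect to be most delicate is establishing $s(M)<0$ in the algebraic route (or, equivalently, the invariance of the order interval $\{\mathbf u\le\bar{\mathbf u}\}$ under the flow in the dynamical route); but both reduce to standard Perron--Frobenius and $M$-matrix facts once the supersolution $\bar{\mathbf u}=K\boldsymbol\phi$ has been identified.
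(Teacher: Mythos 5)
Your proof is correct, but it is not the route the paper takes for this particular theorem. The paper proves Theorem~\ref{theorem:BM} by a combinatorial \emph{sign-pattern} argument: it assigns to each node the sign of its logistic term at equilibrium, uses the net-flow structure of the stream (Lemma~\ref{lemma:sign}) to rule out all patterns in which a most upstream node is above carrying capacity or a most downstream node is below it, and then bounds $u_1^*,u_2^*,u_3^*$ separately in each of the handful of admissible patterns. Your argument --- identify the right null vector $\boldsymbol\phi$ of $L$, set $\bar{\mathbf u}=K\boldsymbol\phi$, verify it is a supersolution since $r_i\ge 0$ and $\bar u_i\ge K$, and conclude $\mathbf u^*\le\bar{\mathbf u}$ by monotone dynamics (or the equivalent $M$-matrix computation), with the equality case read off from where $\bar u_i>K$ --- is precisely the argument the paper deploys later for the general $n$-node result, Theorem~\ref{theorem:biomass_n}, whose proof uses $\bar{\bm u}=K\bm v$ with $v_i=\bigl(\frac{d+q}{d}\bigr)^{f(i)}$ and \cite[Proposition 3.2.1]{smith2008monotone}; your three vectors $\boldsymbol\phi_t,\boldsymbol\phi_s,\boldsymbol\phi_d$ are exactly the specializations of that $\bm v$, and your sums $3+\frac{q}{d}$, $3+3\frac{q}{d}+\frac{q^2}{d^2}$, $3+2\frac{q}{d}$ and equality characterizations all check out. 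What you lose relative to the paper's sign-pattern proof is the finer structural information it produces as a by-product (a complete catalogue of admissible equilibrium configurations, the direction of net flows, and the monotone ordering of patch densities along the stream), which the authors argue is reusable for non-monotone settings such as two-species competition; what you gain is a proof that is shorter, avoids case enumeration, and generalizes immediately to arbitrary homogeneous flow stream networks.
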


Theorem~\ref{theorem:BM} implies that the straight and distributary streams, which are commonly observed in the middle and lower courses of freshwater systems, could sustain larger population sizes than the tributary streams, which often occur in the upper course of stream/river systems.
Moreover, the biased population movement in the streams tends to promote the network biomass. Specifically, when the drifted movement is neglected (i.e., $q=0$), all upper bounds of the network biomass $\mathcal{K}$ in Theorem~\ref{theorem:BM} become $3K$, the sum of the carrying capacities of the three patches. In contrast, the larger the drift-diffusion ratio $q/d$, the larger the maximum network biomass.

\subsection{Proof of Theorem \ref{theorem:BM} Using Sign Patterns}

In order to prove Theorem \ref{theorem:BM}, 
we introduce the notion of sign patterns of an equilibrium. Note that the positive equilibrium $\bm u^* = (u_1^*,u_2^*,u_3^*)$ satisfies
\begin{equation}\label{equilibrium}
 r_iu^*_i\bigg(1-\frac{u^*_i}{K}\bigg) + \sum_{j=1}^3(\ell_{ij}u_j^* -\ell_{ji}u_i^*)  = 0, \quad i=1,2,3,
\end{equation}
where $\ell_{ij}$ are the entries of the movement matrix $L$. 

\begin{definition}\label{sign_pattern}
Suppose $\bm u^*$ is the unique positive equilibrium for a certain choice of parameters $(r_1,r_2,r_3)$. We assign each node $i$ of the stream network with one of the three signs $\{+,-,0\}$, which is the same as the sign of the corresponding logistic growth term  at the equilibrium, i.e.,  $$sign(\mathrm{node} \ i) = sign\Bigg(r_iu^*_i\Big(1-\frac{u^*_i}{K}\Big)\Bigg).$$ The sign pattern of the equilibrium $\bm u^*$ is defined as
\[
sign(\bm u^*) = (sign(\mathrm{node} \ 1),sign(\mathrm{node} \ 2),sign(\mathrm{node} \ 3)).
\]
\end{definition}

\begin{definition}
Suppose that node $i$ is an immediate upstream node of node $j$. At the positive equilibrium we define the {flow} from node $i$ to node $j$ as $(d+q)u_i^*$ and the {flow} from node $j$ to node $i$ as $du_j^*$. We say that there is a net flow from node $i$ to node $j$ if $(d+q)u_i^*>du_j^*$, there is a net flow from node $j$ to node $i$ if $(d+q)u_i^* < du_j^*$, and there is zero net flow between the 2 nodes if $(d+q)u_i^*=du_j^*$. 
\end{definition}
\begin{example}
For the straight stream as depicted in Figure \ref{fig-3patch}, if $(r_1,r_2,r_3)=(r,0,0)$, then the positive equilibrium is 
\[
\bm u^* = \bigg(K,\frac{d+q}{d}K,\frac{(d+q)^2}{d^2}K\bigg).
\]
Therefore the sign pattern of this equilibrium is $sign(\bm u^*)=(0,0,0)$. We show later that $(0,0,0)$ is the sign pattern which maximizes the network biomass in all three stream networks (straight, tributary, and distributary).
\end{example}
We provide some intuitions on sign patterns in the following remark.
\begin{remark}\label{remark:sign}
A node having a $(-)$ sign implies that the population of that node at the equilibrium exceeds the carrying capacity $K$. This also implies that at the equilibrium there is a {\it net inflow} of the population from adjacent nodes to this node, or the total flow out of the node is smaller than the total flow into the node. 

On the other hand, a patch having a $(+)$ sign implies that the population of that node at the equilibrium is smaller than the carrying capacity $K$ and there is a {\it net outflow} of the population from this node to adjacent nodes.

Finally, a patch having a $(0)$ sign implies that the population of that node at the equilibrium is exactly the same as the carrying capacity $K$ or $r_i=0$ for that node, and thus both the growth and the net flow at the patch are 0.
\end{remark}

Next, we make an observation that due to the strong connectivity and asymmetry of stream networks, not all sign patterns can happen. Specifically, we say that a sign pattern is \textit{admissible} if there is a choice of parameters $(r_1,r_2,r_3)$ such that the corresponding positive equilibrium admits that sign pattern; otherwise, it is called \textit{inadmissible}. The following lemma characterizes the admissible/inadmissible patterns for stream networks.

\begin{lemma}\label{lemma:sign}
The following statements hold for all three stream networks as depicted in Figure \ref{fig-3patch}.
\begin{enumerate}
\item[\rm (i)] Any admissible sign pattern must be $(0,0,0)$ or it must have at least one $(+)$ and one $(-)$ sign.
\item[\rm (ii)] The population at the positive equilibrium of the most upstream nodes cannot exceed the carrying capacity $K$, and thus the sign patterns where
a most upstream node has a $(-)$ sign are inadmissible. Specifically, for a tributary stream, the sign patterns $(-,\cdot,\cdot)$ and $(\cdot,-,\cdot)$ are inadmissible. For a straight stream, the sign patterns $(-,\cdot,\cdot)$ are inadmissible. For a distributary stream, the sign patterns $(-,\cdot,\cdot)$ are inadmissible.
\item[\rm (iii)] The sign patterns where a most downstream node has a $(+)$ sign are inadmissible. Specifically, for a tributary stream, the sign patterns $(\cdot,\cdot,+)$ are inadmissible. For a straight stream, the sign patterns $(\cdot,\cdot,+)$ are inadmissible. For a distributary stream, the sign patterns $(\cdot,\cdot,+)$ and $(\cdot,+,\cdot)$ are inadmissible. 
\end{enumerate}
\end{lemma}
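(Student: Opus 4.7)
The proof rests on a single conservation identity obtained by summing \eqref{equilibrium} over any subset $S\subseteq\{1,2,3\}$: the internal flow contributions cancel in pairs, leaving
\begin{equation*}
\sum_{i\in S} r_i u_i^*\Big(1-\frac{u_i^*}{K}\Big) \;=\; \sum_{\substack{i\in S \\ j\notin S}}\bigl(\ell_{ji} u_i^* - \ell_{ij} u_j^*\bigr),
\end{equation*}
so the total logistic contribution of $S$ equals the net $u^*$-flux leaving $S$. The special case $S=\{1,2,3\}$ gives the global balance $\sum_i r_i u_i^*(1-u_i^*/K)=0$, from which part (i) is immediate: either every summand vanishes, producing the pattern $(0,0,0)$, or some positive summand must be balanced by a negative one, forcing both a $(+)$ and a $(-)$ to appear.

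For part (ii), suppose for contradiction that a most upstream node, say node $1$, has a $(-)$ sign, so $r_1>0$ and $u_1^*>K$. The plan is to propagate the inequality $u^*>K$ downstream via a nested family of cuts. In the straight stream the single-node identity at $1$ gives $(d+q)u_1^*-du_2^*<0$, hence $u_2^*>\tfrac{d+q}{d}u_1^*>K$; the identity for $S=\{1,2\}$ then has strictly negative left-hand side (the $r_1$ term is strictly negative and the $r_2$ term is nonpositive), yielding $(d+q)u_2^*-du_3^*<0$ and thus $u_3^*>K$; finally the full-set identity is a sum of nonpositive terms with the first strictly negative, contradicting that it must equal zero. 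The tributary and distributary streams are handled by taking the analogous nested cuts dictated by the topology.

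For part (iii), the strategy is dual: assume a most downstream node has a $(+)$ sign and show that every $u_i^*<K$, which contradicts the full-set identity (all nonnegative terms, with one strictly positive). In the straight stream with node $3$ of $(+)$ sign, the single-node identity at $3$ yields $u_2^*<\tfrac{d}{d+q}u_3^*<K$, and then the identity for $S=\{2,3\}$ yields $u_1^*<K$. In the tributary stream with node $3$ of $(+)$ sign, the assumption $u_1^*\ge K$ in the single-node identity at $1$ would force $u_3^*\ge\tfrac{d+q}{d}u_1^*>K$, a contradiction; similarly $u_2^*<K$.

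The subtlest case, which I expect to be the main obstacle, is the distributary stream in part (iii). Assuming node $2$ has a $(+)$ sign, its single-node identity gives $u_1^*<\tfrac{d}{d+q}u_2^*<K$, so the logistic term at node $1$ is nonnegative; the single-node identity at $1$ then supplies the key bound $u_2^*+u_3^*\le\tfrac{2(d+q)}{d}u_1^*$, which combined with $u_2^*>\tfrac{d+q}{d}u_1^*$ forces $u_3^*<\tfrac{d+q}{d}u_1^*<u_2^*<K$; the case of node $3$ is symmetric. The difficulty is that the parallel downstream siblings $2$ and $3$ preclude direct upstream-to-downstream propagation along a chain, so the argument must route through node $1$ and use the identity there to obtain a joint upper bound on $u_2^*+u_3^*$.
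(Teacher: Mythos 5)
Your central device --- summing the equilibrium identities over a subset $S$ of nodes so that internal fluxes cancel and only the boundary flux survives --- is exactly the mechanism the paper uses, just stated more systematically: the paper phrases the single-node case as ``net inflow/outflow'' (Remark \ref{remark:sign}) and the paper's $S=\{1,2\}$ step for the straight stream appears as ``$(d+q)u_1^*+du_3^* \geq du_2^*+(d+q)u_2^*$ combined with $du_2^*\geq(d+q)u_1^*$.'' Your treatment of part (iii) is a genuinely different (and arguably cleaner) route: the paper derives the tributary and straight cases by invoking part (i) to force a $(-)$ sign at an upstream node and then citing part (ii), whereas you propagate $u_i^*<K$ downstream-to-upstream directly and contradict the global balance; for the distributary case the paper deduces from part (i) that node $3$ must carry a $(-)$ sign and contradicts that via the node-$3$ identity, while you instead extract the joint bound $u_2^*+u_3^*\le\tfrac{2(d+q)}{d}u_1^*$ from the node-$1$ identity --- both are valid, and your version has the merit of not depending on parts (i)--(ii). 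For part (ii) your nested-cut propagation is correct for the straight stream; for the distributary stream note that the node-$1$ identity only bounds the \emph{sum} $u_2^*+u_3^*$ from below, so you must first conclude that at least one sibling exceeds $\tfrac{d+q}{d}u_1^*$ before the cuts $\{1,2\}$ and $\{1,3\}$ can be run --- the same wrinkle you correctly anticipated in part (iii).

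There is one genuine shortfall: part (ii) asserts not only that a $(-)$ sign at a most upstream node is inadmissible but also that $u_i^*\le K$ at every most upstream node. Your argument opens with ``node $1$ has a $(-)$ sign, so $r_1>0$ and $u_1^*>K$,'' which does not cover the case $r_1=0$, $u_1^*>K$ (a $(0)$ sign with population above capacity). The paper assumes only $u_1^*>K$ and handles the resulting $(0,0,0)$ branch separately. Your machinery repairs this at no cost: assuming only $u_1^*>K$, the logistic term at node $1$ is nonpositive, the same nested cuts propagate $u_i^*>K$ to every node, and the full-set identity then forces every logistic term to vanish, hence $r_i=0$ for all $i$ (since each factor $u_i^*(1-u_i^*/K)$ is strictly negative), contradicting $\sum_i r_i=r>0$. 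You should state this strengthened version explicitly, since the inequality $u_1^*\le K$ is what the remark following the lemma and the biomass estimates in Theorem \ref{theorem:BM} actually lean on.
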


\begin{proof}
\phantom{}

(i) Taking the sum of the three equations in \eqref{equilibrium}, we have
\begin{equation}\label{eq:samesigns}
\sum_{i=1}^3 r_i u_i^* \bigg(1-\frac{u_i^*}{K}\bigg) =0.
\end{equation}
Thus either all 3 terms in the sum are 0, or there must be at least one positive and one negative term.

(ii) We prove the statement for each of the three configurations. 

\vspace{.1in}
\noindent \textbf{A tributary stream}
\vspace{.1in}

First, without loss of generality, assume that $u_1^* >K$. This means node 1 must have a $(-)$ or $(0)$ sign.  Remark \ref{remark:sign} tells us that the total flow into node 1, $du_3^*$, must be greater than or equal to the total flow from node 1, $(d+q)u_1^*$. This implies $du_3^* \geq (d+q)u_1^*$ and thus $u_3^* \geq \frac{d+q}{d}u_1^* >\frac{d+q}{d}K$.  Thus node 3 must have a $(-)$ or $(0)$ sign. From part (i), node 2 must have a $(+)$ sign or the sign pattern must be $(0,0,0)$. 

We first consider the case when the sign pattern is $(0,0,0)$. Since there must be at least one $r_i>0$, there must be at least one $u_i^*=K$. It is easy to check that when $u_2^*=K$ or $u_3^*=K$, we must have $u_1^* < K$. Thus in all cases, $u_1^*\leq K$, which contradicts the initial assumption.

If node 2 has a $(+)$ sign, there must be a net outflow from node 2. This means $u_2^*<K$ and $(d+q)u_2^*>du_3^*$, which imply $u_3^*< \frac{d+q}{d}u_2^*< \frac{d+q}{d}K$. We reach a contradiction.

\vspace{.1in}
\noindent \textbf{A straight stream}
\vspace{.1in}

Next, suppose that $u_1^* > K$, which implies node 1 must have a $(-)$ or $(0)$ sign. Since the total flow into node 1, $du_2^*$, must be greater than or equal to the total flow from node 1, $(d+q)u_1^*$, this implies $du_2^* \geq (d+q)u_1^*$ and thus $u_2^* \geq \frac{d+q}{d}u_1^* >K$.  Thus node 2 must have a $(-)$ or $(0)$ sign. 
Using a similar argument, we must have $(d+q)u_1^*+du_3^* \geq du_2^*+(d+q)u_2^*$. Combining this with $du_2^*\geq (d+q)u_1^*$ from the analysis of node 1 yields $du_3^* \geq (d+q)u_2^*$, which implies $u_3^* \geq \frac{d+q}{d}u_2^* > K$. Thus node 3 also has a $(-)$ or $(0)$ sign. 

According to part (i), since there is no node with a $(+)$ sign, the only admissible sign pattern in this case is $(0,0,0)$. Since there must be at least one $r_i>0$, there must be at least one $u_i^*=K$. It is easy to check that when $u_2^*=K$ or $u_3^*=K$, we must have $u_1^* < K$. Thus in all cases, $u_1^*\leq K$, which contradicts the initial assumption.

\vspace{.1in}
\noindent \textbf{A distributary stream}
\vspace{.1in}

Finally, assume that $u_1^* >K$, or equivalently node 1 must have a $(-)$ or $(0)$ sign. From \eqref{eq:samesigns} in part (i), the sign pattern must be $(0,0,0)$, or at least one of the 2 downstream nodes must have a $(+)$ sign. 

If the sign pattern is $(0,0,0)$, similar to the first two configurations, we must have $u_1^* \leq K$ which contradicts the initial assumption.

If the sign pattern is not $(0,0,0)$, at least one of the 2 downstream nodes must have a $(+)$ sign.  Without loss of generality, suppose node 2 has a $(+)$ sign. Then we have $u_1^*>K$ and $u_2^*<K$ and there must be a net outflow from node 2. This requires $du_2^*>(d+q)u_1^*$, and thus $u_2^*>\frac{d+q}{d}u_1^*>K$, which contradicts the assumption that node 2 has a $(+)$ sign.

(iii) We prove the statement for each of the three configurations. 

For a tributary stream, suppose that node 3 has a $(+)$ sign. From part (i), either node 1 or node 2 must have a $(-)$ sign, which is in turn impossible due to part (ii).

For a straight stream, suppose that node 3 has a $(+)$ sign. We must have $u_3^*<K$, and there is a net outflow from node 3. This means $du_3^*>(d+q)u_2^*$, and thus $u_2^*<\frac{d}{d+q}u_3^*<\frac{d}{d+q}k<K$. So node 2 must have a $(+)$ or a $(0)$ sign and due to part (i), node 1 must have a $(-)$ sign, which is in turn impossible due to part (ii).

For a distributary stream, without loss of generality, suppose that node 2 has a $(+)$ sign. Using the same argument above, $u_1^*<\frac{d}{d+q}K$ and node 1 must have a $(+)$ or  $(0)$ sign. Due to equation \eqref{eq:samesigns}, node 3 must have a $(-)$ sign. This means there must be a net inflow into node 3, which implies $du_3^*<(d+q)u_1^*$ and thus $u_3^*<\frac{d+q}{d}u_1^*<K$. We reach a contradiction.

\end{proof}

\begin{remark}\label{remark:upstream_downstream}
The last two statements in Lemma \ref{lemma:sign} highlight the effect of the asymmetry inherent to stream network on the positive equilibrium. Since the asymmetry is in favor of the downstream nodes, at the equilibrium it is impossible for a most upstream node to have population exceeding the carrying capacity or a most downstream node to have population less than the carrying capacity.
\end{remark}

Now we are ready to list all the admissible sign patterns for each stream network, and provide the proof of Theorem \ref{theorem:BM}.
\begin{proof}[Proof of Theorem \ref{theorem:BM}]
It suffices to prove the theorem for the sign patterns not ruled out by Lemma \ref{lemma:sign}.

\vspace{.1in}
\noindent{(i) A tributary stream:} The remaining admissible sign patterns for a tributary stream are included in Figure \ref{sign-tributary} together with a visualization of the net population flow into or out of a node at the positive equilibrium.

\begin{figure}[htbp]
\centering
\begin{tikzpicture}[scale=0.75, transform shape]
\begin{scope}[every node/.style={draw}, node distance= 1.5 cm]
    \node[circle] (11) at (0,0) {$0$};
    \node[circle] (21) at (0.7,-2) {$0$};
    \node[circle] (31) at (1.4,0) {$0$};
    
    \node[circle] (12) at (3,0) {$+$};
    \node[circle] (22) at (3.7,-2) {$-$};
    \node[circle] (32) at (4.4,0) {$0$};
    
    \node[circle] (13) at (6,0) {$0$};
    \node[circle] (23) at (6.7,-2) {$-$};
    \node[circle] (33) at (7.4,0) {$+$};
    
    \node[circle] (14) at (9,0) {$+$};
    \node[circle] (24) at (9.7,-2) {$-$};
    \node[circle] (34) at (10.4,0) {$+$};
\end{scope}
\begin{scope}[every node/.style={fill=white},
              every edge/.style={thick}]

        \draw[thick] [-] (11) -- (21);
        \draw[thick] [-] (21) -- (31);
        
        \draw[thick] [->] (12) -- (22);
        \draw[thick] [-] (32) -- (22);
        
        \draw[thick] [-] (13) -- (23);
        \draw[thick] [->] (33) -- (23);
        
        \draw[thick] [->] (14) -- (24);
        \draw[thick] [->] (34) -- (24);

\end{scope}
\end{tikzpicture}
\caption{All admissible sign patterns for a tributary stream. The arrows capture the net flows between nodes at the positive equilibrium.}\label{sign-tributary}
\vskip -15pt
\end{figure}
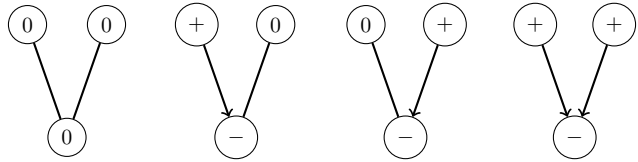
It is straightforward to check that the only choices of $(r_1,r_2,r_3)$ resulting in the sign pattern $(0,0,0)$ are $(r_1,r_2,r_3)=(r_1,r_2,0)$ and $(0,0,r)$. The former yields $\bm u^* = \big(K,K,\frac{d+q}{d}K\big)$ and the latter yields $\bm u^* = \big(\frac{d}{d+q}K,\frac{d}{d+q}K,K\big)$. Thus $\mathcal{K}_t\leq \big(3+\frac{d+q}{d}\big)K$ and the equality happens when $(r_1,r_2,r_3)=(r_1,r_2,0)=(r_1,r-r_1,0)$ for any $0\leq r_1\leq r$.

For the remaining sign patterns, again by using net flow at each node it is easy to show that $u_1^* <K$ , $u_2^*<K $ and $u_3^*< \frac{d+q}{d}K$. Thus for these sign patterns we have $\mathcal{K}_t\leq \big(3+\frac{d+q}{d}\big)K.$

\vspace{.1in}
\noindent{(ii) A straight stream:} The remaining admissible sign patterns for a straight stream are included in Figure \ref{sign-straight} together with a visualization of the net population flow into or out of a node at the positive equilibrium.

\begin{figure}[htbp]
\centering
\begin{tikzpicture}[scale=0.75, transform shape]
\begin{scope}[every node/.style={draw}, node distance= 1.5 cm]
    \node[circle] (11) at (0,0) {$0$};
    \node[circle] (21) at (0,-2) {$0$};
    \node[circle] (31) at (0,-4) {$0$};
    
    \node[circle] (12) at (2,0) {$0$};
    \node[circle] (22) at (2,-2) {$+$};
    \node[circle] (32) at (2,-4) {$-$};
    
    \node[circle] (13) at (4,0) {$+$};
    \node[circle] (23) at (4,-2) {$0$};
    \node[circle] (33) at (4,-4) {$-$};
    
    \node[circle] (14) at (6,0) {$+$};
    \node[circle] (24) at (6,-2) {$-$};
    \node[circle] (34) at (6,-4) {$0$};
    
    \node[circle] (15) at (8,0) {$+$};
    \node[circle] (25) at (8,-2) {$+$};
    \node[circle] (35) at (8,-4) {$-$};
    
    \node[circle] (16) at (10,0) {$+$};
    \node[circle] (26) at (10,-2) {$-$};
    \node[circle] (36) at (10,-4) {$-$};
\end{scope}
\begin{scope}[every node/.style={fill=white},
              every edge/.style={thick}]
    \draw[thick] [-] (11) --  (21);
    \draw[thick] [-] (21) --  (31);

    \draw[thick] [-] (12) --  (22);
    \draw[thick] [->] (22) --  (32);
    
    \draw[thick] [->] (13) --  (23);
    \draw[thick] [->] (23) --  (33);
    
    \draw[thick] [->] (14) --  (24);
    \draw[thick] [-] (24) --  (34);
    
    \draw[thick] [->] (15) --  (25);
    \draw[thick] [->] (25) --  (35);
    
    \draw[thick] [->] (16) --  (26);
    \draw[thick] [->] (26) --  (36);
\end{scope}
\end{tikzpicture}
\caption{All admissible sign patterns for a straight stream. The arrows capture the net flows between nodes at the positive equilibrium.}\label{sign-straight}
\vskip -15pt
\end{figure}
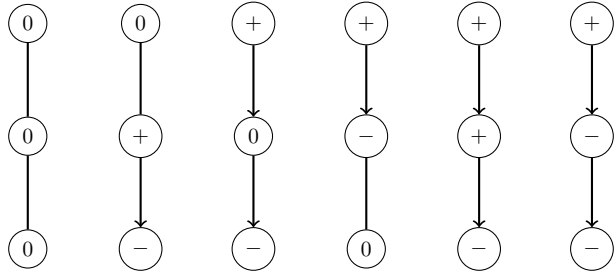
It is straightforward to check that the only choices of $(r_1,r_2,r_3)$ that result in the sign pattern $(0,0,0)$ are $(r_1,r_2,r_3)= (r,0,0), (0,r,0)$ or $(0,0,r)$. In all 3 cases, the positive equilibrium can be computed explicitly, and it is easy to check that $\mathcal{K}_s\leq \big(1+\frac{d+q}{d}+\frac{(d+q)^2}{d^2}\big)K = \big(3+3\frac{q}{d}+\frac{q^2}{d^2}\big)K$
where the equality happens when $(r_1,r_2,r_3) = (r,0,0)$.
For the sign pattern $(0,+,-)$, we have $u_2^*<K$, and looking at the net flow yields $u_1^*=\frac{d}{d+q}u_2^* <K$ and $u_3^* < \frac{d+q}{d} u_2^* < \frac{d+q}{d} K$. Thus $\mathcal{K}_s <  \big(3+3\frac{q}{d}+\frac{q^2}{d^2}\big)K$.

For each of the remaining sign patterns, by analyzing the net flow at each patch, it is straightforward to verify that $u_1^*<K$, $u_2^*<\frac{d+q}{d}K$ and $u_3^*<\frac{(d+q)^2}{d^2}K$. Thus for these sign patterns we also have $\mathcal{K}_s <  \big(3+3\frac{q}{d}+\frac{q^2}{d^2}\big)K$. 

\vspace{.1in}
\noindent{(iii) A distributary stream:} The remaining admissible sign patterns for a distributary stream are included in Figure \ref{sign-distributary} together with a visualization the net population flow into or out of a node at the positive equilibrium.

\begin{figure}[htbp]
\centering
\begin{tikzpicture}[scale=0.75, transform shape]
\begin{scope}[every node/.style={draw}, node distance= 1.5 cm]

    \node[circle] (11) at (0.7,0) {$0$};
    \node[circle] (21) at (0,-2) {$0$};
    \node[circle] (31) at (1.4,-2) {$0$};
    
    \node[circle] (12) at (3.7,0) {$+$};
    \node[circle] (22) at (3,-2) {$0$};
    \node[circle] (32) at (4.4,-2) {$-$};
    
    \node[circle] (13) at (6.7,0) {$+$};
    \node[circle] (23) at (6,-2) {$-$};
    \node[circle] (33) at (7.4,-2) {$0$};
    
    \node[circle] (14) at (9.7,0) {$+$};
    \node[circle] (24) at (9,-2) {$-$};
    \node[circle] (34) at (10.4,-2) {$-$};
\end{scope}
\begin{scope}[every node/.style={fill=white},
              every edge/.style={thick}]

        \draw[thick] [-] (11) -- (21);
        \draw[thick] [-] (11) -- (31);
        
        \draw[thick] [-] (12) -- (22);
        \draw[thick] [->] (12) -- (32);
        
        \draw[thick] [->] (13) -- (23);
        \draw[thick] [-] (13) -- (33);
        
        \draw[thick] [->] (14) -- (24);
        \draw[thick] [->] (14) -- (34);

\end{scope}
\end{tikzpicture}
\caption{All admissible sign patterns for a distributary stream. The arrows capture the net flows between nodes at the positive equilibrium.}\label{sign-distributary}
\vskip -20pt\end{figure}
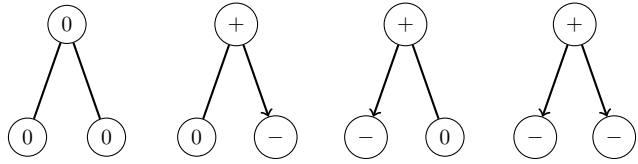

It is straightforward to check that the only choices of $(r_1,r_2,r_3)$ resulting in the sign pattern $(0,0,0)$ are $(r_1,r_2,r_3)= (r,0,0), (0,r_2,r_3)$. The former has positive equilibrium $\bm u^* = \big(K,\frac{d+q}{d}K,\frac{d+q}{d}K\big)$ and thus $\mathcal{K}_d=\big(3+2\frac{q}{d}\big)K$. The latter has positive equilibrium $\bm u^* = \big(\frac{d}{d+q}K,K,K\big)$ and thus $\mathcal{K}_d<\big(3+2\frac{q}{d}\big)K$.

For each of the remaining sign patterns, using the net flow at each node, it is easy to verify that $u_1^* <K$, $u_2^*<\frac{d+q}{d}K$ and $u_3^* < \frac{d+q}{d}K$. Thus for these sign patterns we must have $\mathcal{K}_d<\big(3+2\frac{q}{d}\big)K$.
\end{proof}

\begin{remark}
From Figures \ref{sign-tributary}, \ref{sign-straight}, and \ref{sign-distributary} we observe that the net flows at the positive equilibrium in all admissible sign patterns tend to agree with the flows of the stream network from upstream to downstream. In addition, at the positive equilibrium the populations in upstream nodes are generally smaller than the populations in downstream nodes. 
\end{remark}

We illustrate the contrast between maximizing the metapopulation growth rate and maximizing the network biomass in Figure \ref{fig:biomass_total} which compares the network biomass for the three-node stream networks when the resources are concentrated either in a most upstream or downstream patch. While a strategy that maximizes the growth rates results in faster initial growth, it yields a substantially smaller network biomass.
\begin{figure}[htbp]
    \centering
     \begin{subfigure}[b]{0.32\textwidth}
         \centering
          \includegraphics[width=1.3in,height=1.3in]{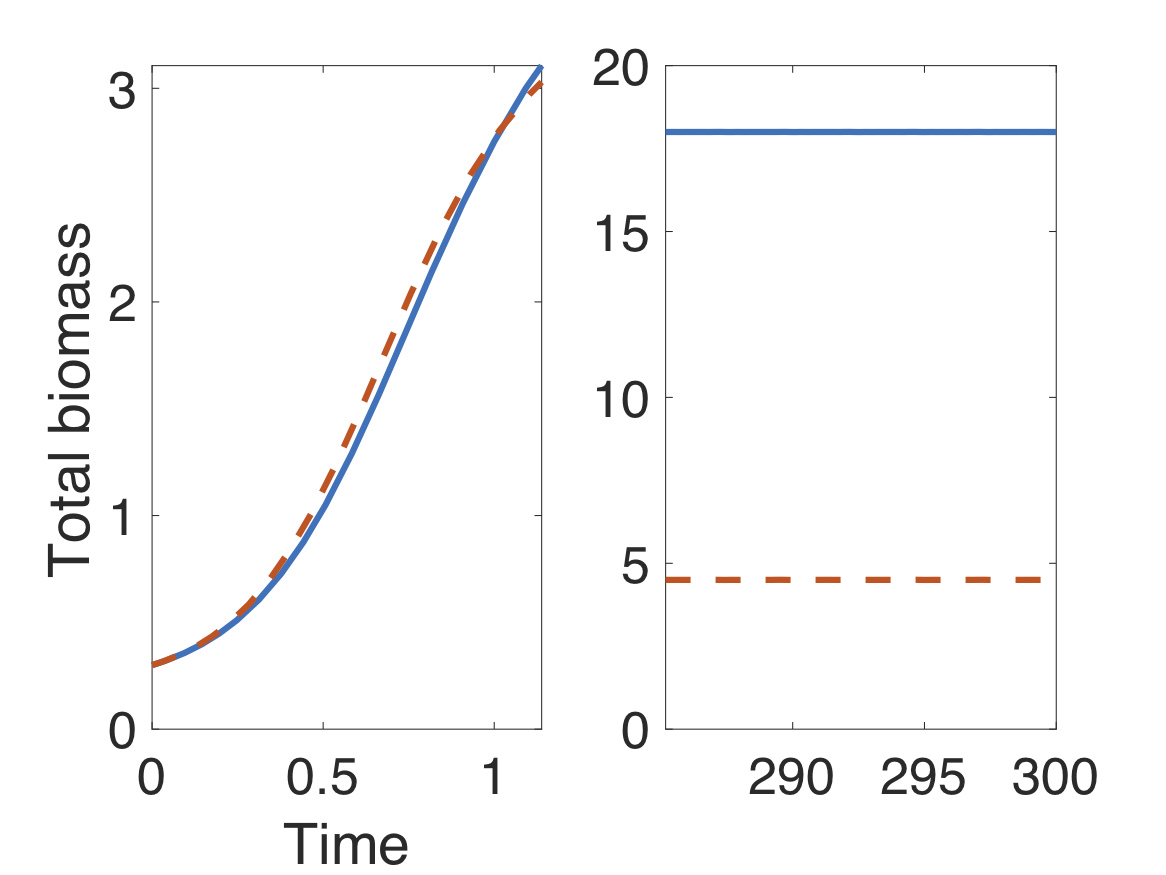}
         \caption{}
         \label{fig:a}
     \end{subfigure}
     \begin{subfigure}[b]{0.32\textwidth}
        \centering
         \includegraphics[width=1.3in,height=1.3in]{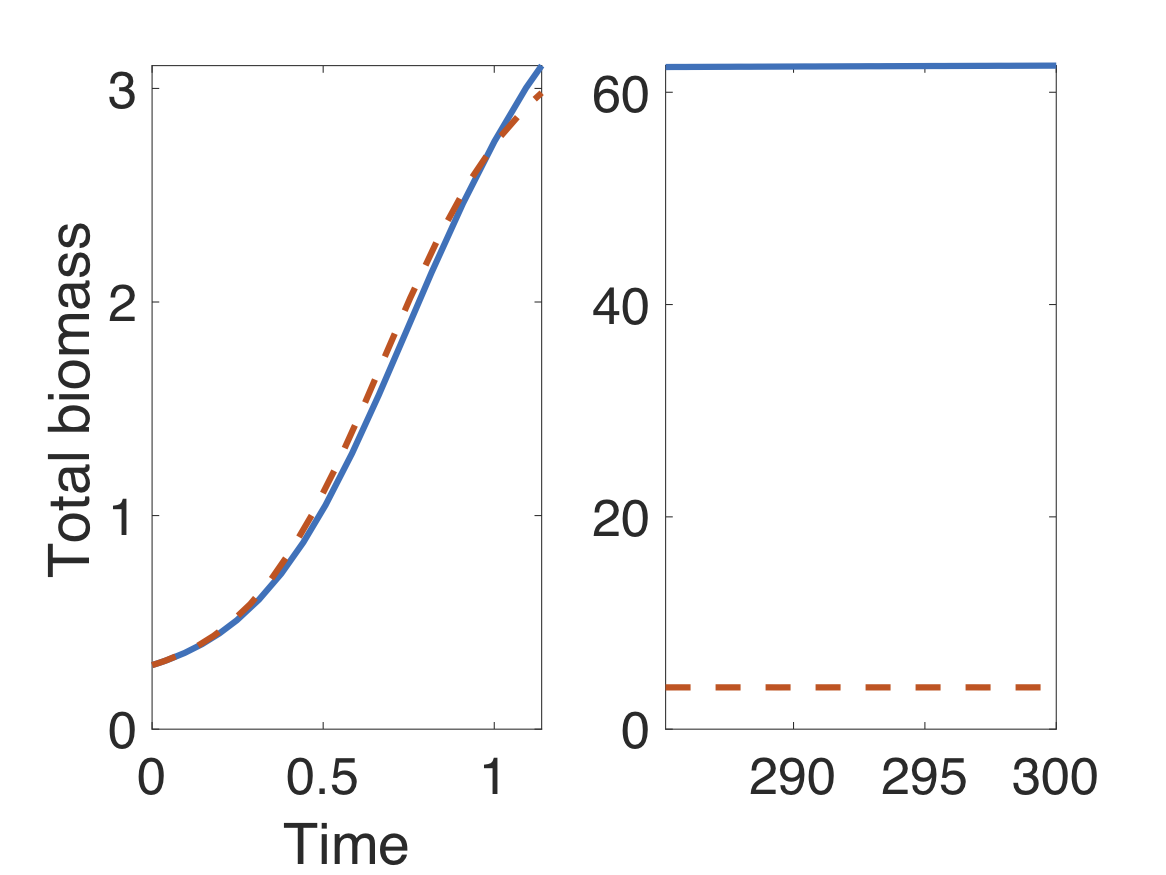}
         \caption{}
         \label{fig:b}
     \end{subfigure}
      \begin{subfigure}[b]{0.32\textwidth}
         \centering
          \includegraphics[width=1.3in,height=1.3in]{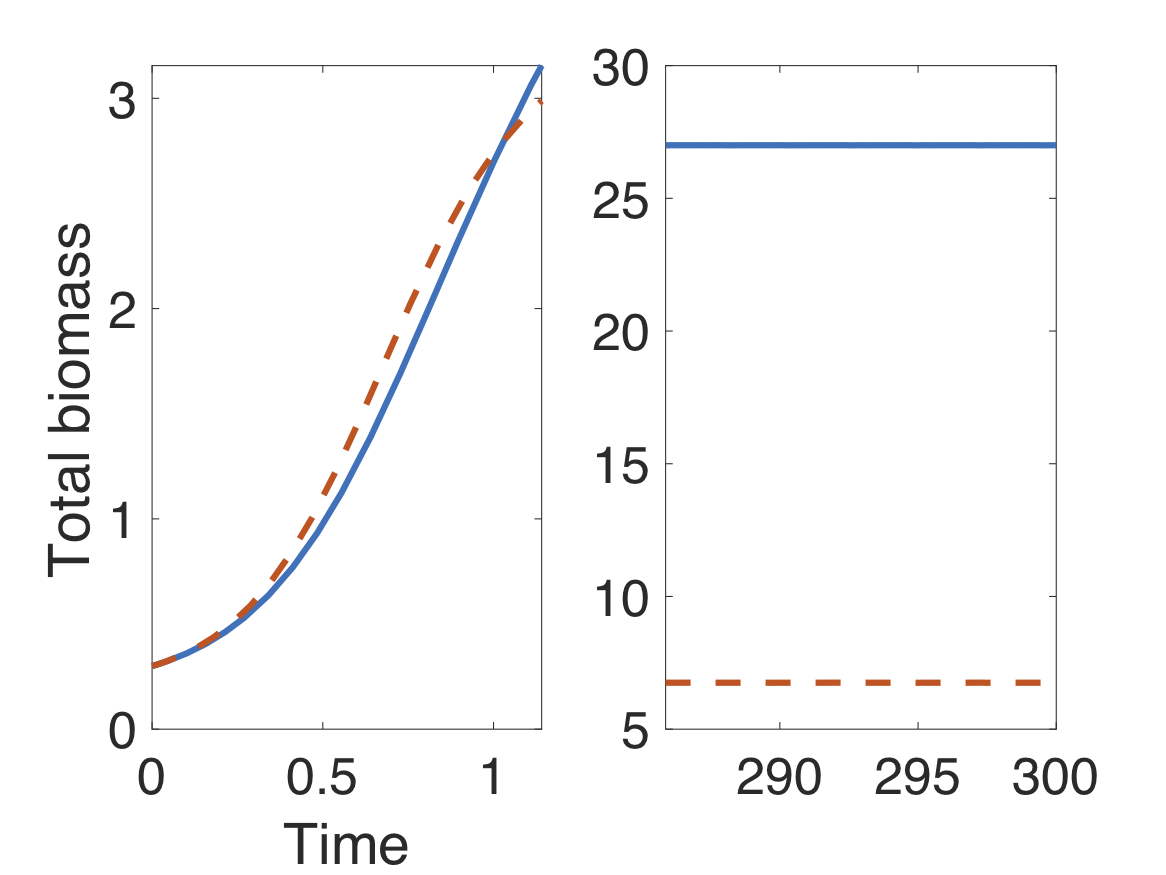}
         \caption{}
         \label{fig:c}
     \end{subfigure}
    \caption{Total population (biomass) over time for (a) the tributary stream, (b) the straight stream, and (c) the distributary stream with two distributions of resources: $(r_1,r_2,r_3)=(r,0,0)$ which maximizes the network biomass (solid blue line) and $(r_1,r_2,r_3)=(0,0,r)$ which maximizes the growth rate (dashed red line). In all the simulations, we set $d=0.1, q=0.3,K=3$, and $r=5$.}
    \label{fig:biomass_total}
    \vskip -20pt
\end{figure}

\section{Maximizing the metapopulation growth rate and network biomass for stream networks of $n$ patches}
\label{section-n}
In this section we provide a definition of stream networks to allow for an arbitrary number of patches. We then extend the results on the metapopulation growth rate and network biomass in Section \ref{section-growthrate} and Section \ref{section-biomass} to stream networks with $n$ patches. Additionally, we provide some further analysis on the straight stream network with $n$ patches, which highlights further differences between the distribution maximizing the metapopulation growth rate and the distribution maximizing the total biomass.

\subsection{Leveled graphs and homogeneous flow stream networks}
In order to define stream networks with $n$ patches, we first introduce the notion of \textit{leveled graphs}.

\begin{definition}\label{def:leveled_graph}
Let $G$ be a directed graph, and denote the set of nodes of $G$ by $V$. Consider a function $f: V\to \mathbb{Z}_{\geq 0}$. We call $f$ a \textit{level function} and for each node $i$, we define $f(i)$ as the \textit{level} of node $i$. We say that $(G,f)$ is a \textit{leveled graph} if the following assumptions are satisfied
\begin{enumerate}%
    \item[(i)] For each $0\leq k\leq \max_{i\in V}\{f(i)\}$, there exists a node $j$ such that $f(j)=k$.
    \item[(ii)] For each pair of nodes $i, j$, there is no edge between $i$ and $j$ if $|f(i)-f(j)|\neq 1$.
\end{enumerate}
\end{definition}

The definition of a leveled graph requires two conditions on the stream network. The first condition is that every node is assigned a level with each level containing at least one node. The second condition is that two nodes may only be connected if they are in consecutive levels. Notice that any graph without cycles of length greater than two can be made into a leveled graph. A graph with cycles of length greater than two can be made into a leveled graph if any path in a cycle connecting the node with the highest level and the node with the lowest level has the same length. For example, the graph depicted in Figure~\ref{leveled_graph}(a) is a leveled graph while the one in Figure~\ref{leveled_graph}(b) cannot be a leveled graph for any choice of level functions.

In what follows, we use leveled graphs to model stream networks. Specifically, we define the nodes with level $0$ to be most upstream nodes, and the level of an arbitrary node to be  the distance between that node and the most upstream nodes. In particular, the nodes with maximum level are the most downstream nodes.

\begin{definition}
Consider a leveled graph $(G,f)$ and an irreducible connection matrix $L$. We say that $(G,f, L)$ is a \textit{homogeneous flow stream network} if the following assumptions are satisfied
\begin{enumerate}
    \item[(i)] If there is an edge from node $i$ to node $j$, then there is also an edge from node $j$ to node $i$. 
    \item[(ii)] If there is an edge from node $i$ to node $j$, then the weight is  $\ell_{ji} = d+q$ if $f(j)-f(i)=1$ (i.e., the edge is from an upstream to a downstream node) and $\ell_{ji}=d$ if $f(i)-f(j)=1$ (i.e., the edge is from a downstream to an upstream node). 
\end{enumerate}
\end{definition}
It is easy to check that the three stream configurations in Figure \ref{fig-3patch} are homogeneous flow stream networks. Additionally, we list all possible homogeneous flow stream networks with four nodes (up to vertical symmetry) in Figure \ref{fig2} in the Appendix. Finding the total number of homogeneous flow stream networks with $n$ nodes is an interesting combinatorics question, and we leave it to curious readers.

\begin{definition}
Let $(G,f, L)$ be a homogeneous flow stream network and $V$ be the set of nodes of $G$. 
A node $i\in V$ is called a most downstream end node if $f(i)=\max_{i\in V}\{f(i)\}$; it is called a downstream end node if there does not exist an adjacent node $j\in V$ such that $f(j)-f(i)=1$.
\end{definition}

\begin{figure}[htbp]
  \begin{subfigure}[b]{0.45\textwidth}
    \centering
    \resizebox{0.6\linewidth}{!}{

{\small \begin{tikzpicture}[scale=0.6, transform shape]
\begin{scope}[every node/.style={draw}, node distance= 1.5 cm]

    \node[circle] (1) at (-4+0,     0-0) {$1$};
    \node[circle] (2) at (-4-2,   0-3) {$2$};
    \node[circle] (3) at (-4+2,   0-3) {$3$};
    \node[circle] (4) at (-4+0,     0-6) {$4$};  
  
\end{scope}
\begin{scope}[every node/.style={fill=white},
              every edge/.style={thick}]
    
    \draw[thick] [->](1) to [bend right] node[left=5] {{}} (2); 
    \draw[thick, dashed] [<-](1) to node[right=4] {{}} (2); 
    \draw[thick]  [->](1) to [bend left] node[right=5] {{}} (3);
    \draw[thick, dashed] [<-](1) to node[left=4] {{}} (3); 
    \draw[thick] [->](2) to [bend right] node[right=5] {{}} (4);
    \draw[thick, dashed] [<-](2) to node[left=4] {{}} (4); 
    \draw[thick] [->](3) to [bend left] node[right=5] {{}} (4); 
    \draw[thick, dashed] [<-](3) to node[left=4] {{}} (4);

\end{scope}

\end{tikzpicture}
}}
\caption{}
\label{a}
\end{subfigure}
 \begin{subfigure}[b]{0.45\textwidth}
    \centering
    \resizebox{0.6\linewidth}{!}{

\small{
 \begin{tikzpicture}[scale=0.6, transform shape]
\begin{scope}[every node/.style={draw}, node distance= 1.5 cm]
    
    \node[circle] (5) at (4+0,      0-0) {$1$};
    \node[circle] (6) at (4-2,    0-2) {$2$};
    \node[circle] (7) at (4-2,    0-4) {$3$};
    \node[circle] (8) at (4+2,    0-2) {$4$};
    \node[circle] (9) at (4+0,      0-6) {$5$};
 
\end{scope}
\begin{scope}[every node/.style={fill=white},
              every edge/.style={thick}]
    \draw[thick] [->](5) to [bend right] node[left=5] {{}} (6);
    \draw[thick, dashed] [<-](5) to node[right=4] {{}} (6); 
    \draw[thick] [->](6) to [bend right=15] node[right=5] {{}} (7); 
    \draw[thick, dashed] [<-](6) to (7); 
    \draw[thick] [->](5) to [bend left] node[right=5] {{}} (8); 
    \draw[thick, dashed] [<-](5) to node[left=4] {{}} (8); 
    \draw[thick] [->](8) to [bend left] node[right=5] {{}} (9);
    \draw[thick, dashed] [<-](8) to node[left=4] {{}} (9);
    \draw[thick] [->](7) to [bend right] node[right=5] {{}} (9); 
    \draw[thick, dashed] [<-](7) to node[left=4] {{}} (9);

\end{scope}
\end{tikzpicture}
}
}
\caption{ }
\label{b}
\end{subfigure}
\caption{Graph (a) is a leveled graph with a level function $f(1)=0,f(2)=f(3)=1,f(4)=2$, while (b) is not a leveled graph.}\label{leveled_graph}
\vskip -20pt
\end{figure}

\subsection{Maximizing the growth rate for homogeneous flow stream networks}

We first prove the following lemma concerning the right Perron eigenvector ${\bm v}$ to the matrix $dD + qQ$. For the three-node stream networks considered in Sections \ref{section-growthrate} and \ref{section-biomass} this vector appeared in both the biomass and  growth rate calculations. 

\begin{lemma}\label{lemma vector v}
Let $(G,f,L)$ be a homogeneous flow stream network. Let $\bm v$ be the solution to $(dD+qQ)\bm v = \bm 0$. Then $\bm v$ is defined, up to a constant multiple, as
\[
v_i = \left(\frac{d+q}{d}\right)^{f(i)}.
\]
\end{lemma}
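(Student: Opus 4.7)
The plan is to verify the candidate $v_i=((d+q)/d)^{f(i)}$ directly by rewriting the eigenvector equation $(dD+qQ)\bm v=\bm 0$ componentwise and exploiting the fact that every edge in a leveled graph connects consecutive levels. Uniqueness (up to scalar) then follows immediately, because the connection matrix $L=dD+qQ$ is irreducible by hypothesis, so by Perron--Frobenius the eigenvalue $0$ is simple and has a one-dimensional eigenspace.

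First I would fix a node $i$ and split its neighborhood into $U_i:=\{j:f(j)=f(i)-1,\ \ell_{ij}>0\}$ (immediate upstream neighbors) and $D_i:=\{j:f(j)=f(i)+1,\ \ell_{ij}>0\}$ (immediate downstream neighbors). By condition (ii) of Definition~\ref{def:leveled_graph}, these are the only nodes adjacent to $i$. The $i$th row of $(dD+qQ)\bm v=\bm 0$ reads
\begin{equation*}
\sum_{j\in U_i}\ell_{ij}v_j+\sum_{j\in D_i}\ell_{ij}v_j\;=\;\Big(\sum_{j\in U_i}\ell_{ji}+\sum_{j\in D_i}\ell_{ji}\Big)v_i,
\end{equation*}
where by the homogeneous flow rule $\ell_{ij}=d+q$ for $j\in U_i$, $\ell_{ij}=d$ for $j\in D_i$, while the outgoing weights are $\ell_{ji}=d$ for $j\in U_i$ and $\ell_{ji}=d+q$ for $j\in D_i$.

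Next I would substitute the candidate values $v_j=((d+q)/d)^{f(j)}$ into the left-hand side. Using $f(j)=f(i)-1$ on $U_i$ and $f(j)=f(i)+1$ on $D_i$ together with the identities $(d+q)\bigl(\tfrac{d+q}{d}\bigr)^{f(i)-1}=d\bigl(\tfrac{d+q}{d}\bigr)^{f(i)}$ and $d\bigl(\tfrac{d+q}{d}\bigr)^{f(i)+1}=(d+q)\bigl(\tfrac{d+q}{d}\bigr)^{f(i)}$, the left-hand side collapses to $\bigl(|U_i|\,d+|D_i|\,(d+q)\bigr)v_i$, which is exactly the right-hand side. Thus the eigenvector equation holds at every node.

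I do not expect a real obstacle here: the argument is essentially a local balance check at each level using the two algebraic identities above. The only subtlety is bookkeeping—making sure the role of the weights $\ell_{ij}$ (in) versus $\ell_{ji}$ (out) is not confused—and confirming that Definition~\ref{def:leveled_graph}(ii) truly rules out any edges between non-adjacent levels, so that the sum decomposes cleanly into $U_i$ and $D_i$. With those in place, the proof reduces to the one-line cancellation above, and the uniqueness clause follows from the irreducibility of $L$ and the Perron--Frobenius theorem already invoked in Section~\ref{section-meta}.
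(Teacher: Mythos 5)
Your proposal is correct and follows essentially the same route as the paper: both verify the candidate $v_i=((d+q)/d)^{f(i)}$ by a local balance computation at each node, splitting the neighbors into the immediate upstream and downstream sets (which Definition~\ref{def:leveled_graph}(ii) guarantees exhaust the adjacency) and cancelling term by term, with simplicity of the zero eigenvalue from Perron--Frobenius giving uniqueness up to scalar. The only cosmetic difference is that the paper cancels each summand $((d+q)v_j-dv_i)$ and $(dv_j-(d+q)v_i)$ individually, whereas you collapse the whole row to $(|U_i|d+|D_i|(d+q))v_i$ on both sides; the content is identical.
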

\begin{proof}
 By the Perron-Frobenius Theorem, a non-trivial solution of $(dD+qQ)\bm v = \bm 0$ exists, which is an eigenvector of $dD+qQ$ corresponding to eigenvalue $0$.  For each $i$, the $i$-th row of $(dD+qQ)\bm v$ can be rewritten as
\[
\sum_{j:f(i)-f(j)=1}((d+q)v_j-dv_i) \; \;\;+ \sum_{j:f(j)-f(i)=1}(dv_j - (d+q)v_i) 
\]
where the first sum is over all the nodes connected to and upstream of node $i$, and the second sum is over all the nodes connected to and downstream of node~$i$.

Consider a vector $\bm v=(v_1,\ldots,v_n)$ with
\[
v_i = \left(\frac{d+q}{d}\right)^{f(i)}.
\]
Then we have
\begin{align*}
\sum_{j:f(i)-f(j)=1}((d+q)v_j-dv_i) &=\sum_{j:f(i)-f(j)=1}dv_i\left(\frac{d+q}{d}\frac{v_j}{v_i}-1\right) \\
& = \sum_{j:f(i)-f(j)=1}dv_i\left(\frac{d+q}{d}\left(\frac{d+q}{d}\right)^{f(j)-f(i)}-1\right)\\
&= \sum_{j:f(i)-f(j)=1} 0 =0.
\end{align*}
Similarly, we can show $\sum_{j:f(j)-f(i)=1}(dv_j - (d+q)v_i)=0$. Thus each row of $(dD+qQ)\bm v$ is $0$ and we can conclude $(dD+qQ)\bm v=\bm 0$.
\end{proof}

Using Lemma \ref{lemma vector v}, Theorems \ref{theorem growth rate} and \ref{theorem growth rate 2} in Section \ref{section-growthrate} can be extended to any homogeneous flow stream network, as stated in the Theorem \ref{theorem:growth_n}. Note that the result of part (i) does not specify which  downstream end node one has to concentrate all resources on to maximize the metapopulation growth rate. The proof of part (i) is provided in the Appendix. The proof of  part (ii) follows directly from Lemma \ref{lemma vector v} and the same argument as in Theorem \ref{theorem growth rate 2}, and thus is omitted for the sake of brevity.

\begin{theorem}\label{theorem:growth_n}
Let $(G,f,L)$ be a homogeneous flow stream network, and suppose that assumption  \ref{hp1}  holds.
\begin{enumerate}
    \item [\rm (i)] There exists a $\tilde{d}^*>0$ such that for $0<d<\tilde{d}^*$ the metapopulation growth rate of the stream network is maximized when all resources are distributed to exactly one of the  downstream end nodes.		
    \item [\rm (ii)] Suppose $r_i = r/n$. If resources are perturbed so that the amount of resources in a single node are increased while the resources in other nodes are decreased to maintain $\sum r_i = r$, then the metapopulation growth rateTime increases the most if this increase is applied to one of the most downstream end nodes.		
\end{enumerate}
\end{theorem}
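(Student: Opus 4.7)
The plan is to prove the two parts by lifting the three-patch arguments of Theorems \ref{theorem growth rate} and \ref{theorem growth rate 2} to an arbitrary homogeneous flow stream network, with Lemma \ref{lemma vector v} providing the right Perron eigenvector that makes the level structure explicit.

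For part (i), I would first work out the limit $d = 0$. Listing the nodes in order of increasing level, the matrix $qQ + R$ is block lower triangular: drift only sends mass from level $k$ down to level $k+1$, so $(qQ)_{ij} \ne 0$ only when $f(i) = f(j)+1$, while the $j$-th diagonal entry equals $-q k_j$ where $k_j$ is the number of downstream neighbors of node $j$. Its spectrum is therefore exactly $\{r_i - q k_i\}_{i=1}^n$, hence
\[
\rho(0, q, \bm r) \;=\; \max_{1 \le i \le n}(r_i - q k_i).
\]
A downstream end node has $k_i = 0$ while every other node has $k_i \ge 1$, so $\rho(0, q, \bm r) \le r$ with equality iff the entire mass $r$ sits at a single downstream end node. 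I would then mirror the template of Theorem \ref{theorem growth rate}(i): split the simplex $S = \{\bm r \ge 0 : \sum_i r_i = r\}$ into a diffuse set $S_1$ on which $\rho(0, q, \bm r) \le r - \delta$ for some $\delta > 0$, and a set $S_2$ consisting of distributions concentrated near a single downstream end node. Continuity of the spectral bound pushes the maximum into $S_2$ for $d$ small. On each piece of $S_2$ the relevant eigenvalue at $d = 0$ is the unique one near $r$, hence simple; so $\rho(d, q, \bm r)$ extends analytically to a neighborhood of $d = 0$, and the directional derivative along $h = e_{i_0} - e_j$ (mass shifted toward the distinguished downstream end node $i_0$) evaluates to $1$ at $d = 0$ and remains positive for small $d$, forcing the maximum onto the vertex $\bm r = r\, e_{i_0}$.

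For part (ii), the argument is immediate from Lemma \ref{lemma vector v} and the perturbation technique of Theorem \ref{theorem growth rate 2}. With $r_i = r/n$, the vector $\bm w^\top = (1, \ldots, 1)$ is a left eigenvector of $dD + qQ + (r/n)I$ for the Perron eigenvalue $r/n$ (since the columns of $dD + qQ$ sum to zero), while Lemma \ref{lemma vector v} gives the right Perron eigenvector $\bm v$ with $v_i = \bigl((d+q)/d\bigr)^{f(i)}$. Writing $\epsilon E$ for a trace-zero diagonal perturbation whose unique $+1$ entry sits at position $i$ and whose remaining entries are $-\alpha_j \le 0$ with $\sum_{j \ne i} \alpha_j = 1$, the standard simple-eigenvalue perturbation formula yields
\[
\rho_\epsilon \;=\; \frac{r}{n} \;+\; \epsilon \, \frac{\bm w^\top E \bm v}{\bm w^\top \bm v} \;+\; o(\epsilon) \;=\; \frac{r}{n} \;+\; \epsilon \, \frac{v_i - \sum_{j \ne i} \alpha_j v_j}{\sum_\ell v_\ell} \;+\; o(\epsilon).
\]
Since $(d+q)/d > 1$, the entry $v_i$ is strictly increasing in $f(i)$, hence maximized precisely when $f(i) = \max_\ell f(\ell)$, i.e., when the increase is applied at a most downstream end node; this produces the largest first-order correction and gives the claim.

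The main obstacle is in part (i): when the network has several downstream end nodes and the mass is split evenly among them at $d = 0$, multiple diagonal entries of $qQ + R$ tie at the spectral bound, so the Perron eigenvalue is not simple and analytic perturbation theory in $d$ breaks down. The resolution is to take $S_2$ to be a sufficiently small neighborhood of each candidate vertex $r\, e_{i_0}$ (one for each downstream end node) so that only the eigenvalue attached to the selected $i_0$ remains near $r$; on each such neighborhood simplicity and analyticity are restored and the directional derivative argument applies. The rest reduces to routine bookkeeping with the lower-triangular structure of $qQ$ relative to the level function.
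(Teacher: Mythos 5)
Your proposal is correct and follows essentially the same route as the paper's: the same computation of $\rho(0,q,\bm r)=\max_i(r_i-q a_i)$ from the triangular structure of $qQ+R$, the same compactness-plus-continuity step to localize the maximizer, the same simple-eigenvalue analyticity and directional-derivative argument for part (i), and the identical first-order perturbation formula with $\bm w=(1,\dots,1)$ and the eigenvector of Lemma \ref{lemma vector v} for part (ii). The only (immaterial) difference is that you localize to small neighborhoods of the vertices $r\,e_{i_0}$, whereas the paper uses the larger sets $S_{6j}$ where one downstream end node dominates the others by a fixed margin; both resolve the eigenvalue-tie issue you correctly identify.
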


\subsection{Maximizing the biomass for homogeneous flow stream networks}
Next we show that, similar to the stream networks with three nodes, to maximize the network biomass of a homogeneous flow stream network we must concentrate the resources in the most upstream nodes.

\begin{theorem}\label{theorem:biomass_n}
Let $(G,f,L)$ be a homogeneous flow stream network and suppose that assumptions  \ref{hp1} and  \ref{hp2} hold. The network biomass $\mathcal{K}$ has the upper bound
\[
\mathcal{K} \leq K\sum_{i=1}^n \left(1+\frac{q}{d}\right)^{f(i)},
\]
and the maximum is achieved when $r_i=0$ for every node $i$ with a positive level, i.e., $f(i)>0$.
\end{theorem}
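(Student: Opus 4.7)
The plan is to construct, from the left Perron eigenvector computed in Lemma~\ref{lemma vector v}, an explicit componentwise super-equilibrium whose entries sum to the conjectured bound, and then invoke the cooperative (monotone) structure of \eqref{eq-system} to squeeze the true equilibrium $\bm u^*$ below it. Concretely, I would define
$$
\bar{\bm u} = (\bar u_1,\ldots,\bar u_n), \qquad \bar u_i := K\bigl(1+\tfrac{q}{d}\bigr)^{f(i)} = K\bigl(\tfrac{d+q}{d}\bigr)^{f(i)}.
$$
By Lemma~\ref{lemma vector v}, $\bar{\bm u}$ lies in the kernel of $L=dD+qQ$, so the dispersal contribution to the RHS of \eqref{eq-system} at $\bar{\bm u}$ is identically zero.

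The key observation is then immediate: evaluating the RHS of \eqref{eq-system} at $\bar{\bm u}$ gives exactly $r_i\bar u_i(1-\bar u_i/K)$. For a most-upstream node $f(i)=0$ we have $\bar u_i=K$, making this term vanish; for $f(i)>0$ we have $\bar u_i>K$, making it nonpositive, with equality if and only if $r_i=0$. Hence under assumptions \ref{hp1}--\ref{hp2}, $\bar{\bm u}$ is a supersolution of the equilibrium system for every admissible distribution $\bm r$.

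To convert this into an inequality $\bm u^*\leq \bar{\bm u}$ componentwise, I would appeal to the monotone-dynamical-systems framework: the Jacobian of the RHS of \eqref{eq-system} has off-diagonal entries $\ell_{ij}\geq 0$, so the flow is cooperative on $\mathbb{R}_+^n$. By Proposition~\ref{propG}, the positive equilibrium $\bm u^*$ is globally asymptotically stable. A standard comparison result for cooperative systems (see e.g.\ Smith's monograph on monotone dynamical systems) then implies that the trajectory starting at the supersolution $\bar{\bm u}$ is non-increasing and converges to $\bm u^*$, forcing $u_i^*\leq \bar u_i$ for every $i$. Summing yields $\mathcal{K}\leq K\sum_i(1+q/d)^{f(i)}$.

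For the equality case, if $r_i=0$ for every $i$ with $f(i)>0$ then the supersolution computation shows that $\bar{\bm u}$ itself is a positive equilibrium, so uniqueness in Proposition~\ref{propG} gives $\bm u^*=\bar{\bm u}$ and the bound is attained. Conversely, equality in the summed inequality together with $\bm u^*\leq \bar{\bm u}$ forces $\bm u^*=\bar{\bm u}$, and the sign analysis of $r_i(1-\bar u_i/K)$ forces $r_i=0$ at every node with $f(i)>0$. The main obstacle is the comparison step: one must verify carefully that the cooperative framework applies in the presence of the nonlinear logistic terms (it does, because only the off-diagonal entries of the Jacobian matter for monotonicity) and that the supersolution inequality holds coordinatewise rather than merely in the aggregate. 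All other steps are algebraic consequences of Lemma~\ref{lemma vector v}.
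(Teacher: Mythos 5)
Your proposal is correct and follows essentially the same route as the paper: both take $\bar u_i = K\bigl(\tfrac{d+q}{d}\bigr)^{f(i)}$ from Lemma~\ref{lemma vector v} as a supersolution (the dispersal terms vanish and the logistic terms are nonpositive, strictly so exactly when some $r_i>0$ at a node with $f(i)>0$), then use the cooperative/monotone structure to conclude the trajectory from $\bar{\bm u}$ decreases to the unique positive equilibrium, giving $\bm u^*\le\bar{\bm u}$ with equality precisely in the stated case. No gaps.
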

\begin{proof}
For $\bm \phi=(\phi_1, \dots, \phi_n), \bm \psi=(\psi_1, \dots, \psi_n)\in\mathbb{R}^n$, we write $\bm \phi> \bm \psi$ if  $\bm \phi\ge \bm \psi$ and $\bm \phi\neq \bm \psi$; we denote  $\bm \phi\gg \bm \psi$ if $\phi_i>\psi_i$ for all $1\le i\le n$. Since $L$ is essentially nonnegative and irreducible, the solutions of \eqref{eq-system} generate a strongly monotone dynamical system \cite[Theorem 4.1.1]{smith2008monotone}, that is, if $\bm u_1(t)$ and $\bm u_2(t)$ are both solutions of \eqref{eq-system}, then $\bm u_1(0)>\bm u_2(0)$ implies $\bm u_1(t)\gg \bm u_2(t)$ for all $t>0$. Let $\bm v$ be the normalized positive eigenvector of $dD+qQ$ corresponding to principal eigenvalue 0 as defined in Lemma \ref{lemma vector v}. Let $\bar{\bm u}=K\bm v$. Then $\bar u_i=K$ if $f(i)=0$, and $\bar u_i>K$ if $f(i)>0$.  It is easy to see that 
$$
0\ge r_i\bar u_i\left(1-\frac{\bar u_i}{K}\right)= \sum_{j=1}^n (dD_{ij}+qQ_{ij})\bar u_j+r_i\bar u_i\left(1-\frac{\bar u_i}{K}\right), \ \ i=1,\dots, n.
$$
Therefore, $\bm {\bar u}$ is an upper solution of \eqref{eq-system}, and the solution $\bm u(t)$ of \eqref{eq-system} with initial condition $\bm u(0)=\bm {\bar u}$ is non-increasing and converges to an equilibrium of \eqref{eq-system} \cite[Proposition 3.2.1]{smith2008monotone}, which is the positive equilibrium by Proposition \ref{propG}.
  Moreover, the above inequality has at least one strict sign (which means that $\bm u(t)$ is strictly decreasing) if and only if there exists a node $i$ with $f(i)>0$ such that $r_i>0$. Let $\bm u^*$ be the positive equilibrium of \eqref{eq-system}. Hence,  we have that $\bm u^*\le \bm{\bar u}$ and $\bm u^*=\bm{\bar u}$ if and only if $r_i=0$ for any node $i$ with $f(i)>0$. This proves the result. 
\end{proof}

\begin{example}\label{example:straight_n}
\begin{figure}[h]
\centering
\begin{tikzpicture}
\begin{scope}[node distance= 1.5 cm]

    \node[circle, draw] (1) at (0,6) {$1$};
    \node[circle, draw] (2) at (2,6) {$2$};
    \node (3) at (4,6) {$\cdots$};
    \node[circle, draw] (4) at (6,6) {$n$};

\end{scope}
\begin{scope}[every node/.style={fill=white},
              every edge/.style={thick}]

     \draw[line width=0.5mm] [->](1) to [bend right] node[below=0.1] {{\footnotesize $d+q$}} (2);
     \draw[line width=0.5mm] [->](2) to [bend right] node[below=0.1] {{\footnotesize $d+q$}} (3);
    \draw[thick, dashed] [<-](1) to [bend left] node[above=0.1] {{\footnotesize $d$}} (2);
    \draw[thick, dashed] [<-](2) to [bend left] node[above=0.1] {{\footnotesize $d$}} (3);
     \draw[line width=0.5mm] [->](3) to [bend right] node[below=0.1] {{\footnotesize $d+q$}} (4);
    \draw[thick, dashed] [<-](3) to [bend left] node[above=0.1] {{\footnotesize $d$}} (4);

\end{scope}
\end{tikzpicture}
\caption{A $n$-patch homogeneous flow stream network, which is the generalization of the straight stream in Figure \ref{fig-3patch}. The level function is $f(i)=i-1.$}\label{fig:straight_n}
\vskip -15pt
\end{figure}
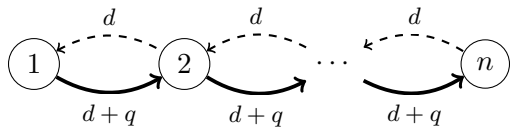

For the homogeneous flow stream network in Figure \ref{fig:straight_n}, we have the upper bound for the network biomass
\[
\mathcal{K} \leq K\sum_{i=0}^{n-1}\left(1+\frac{q}{d}\right)^i
\]
where the maximum is reached when $(r_1,r_2,\dots,r_n)= (r,0,\dots,0)$.
\end{example}

\subsection{Effect of network properties}
From Theorem \ref{theorem:biomass_n}, we know, for example, that a straight stream consisting of $n$ patches can reach a maximum network biomass if $(r_1, r_2, \ldots, r_n)$ $= (r,0,\ldots,0)$. Assuming such a resource distribution, it is clear from the network biomass formula in Example \ref{example:straight_n} that the network biomass increases with respect to the number of patches $n$, and decreases with respect to the diffusion rate $d$. Does the metapopulation growth rate $\rho$ respond to changes in $n$ and $d$ in the same way? In this subsection, we provide additional discussion on how the metapopulation growth rate and network biomass seem to follow opposite trends with respect to changes in parameters such as $n$ and $d$. We base these discussions on an $n$ patch straight stream for illustration purpose.

In Figure \ref{fig:chainlengthB}, the metapopulation growth rate for an $n$-patch straight stream with $(r_1, r_2, \ldots,$ $r_n) = (r,0,\ldots,0)$ is numerically calculated and plotted for different parameter values. In all three sub figures, the solid curves are those of a $2$-patch stream, the dotted curves a $3$-patch stream, the dashed curves a $4$-patch stream, and the dash-dotted curves a $5$-patch stream. These numerical results imply that as $n$ increases from 2 to 5, the metapopulation growth rate decreases. This is expected, since larger $n$ means that there is time for individuals in the population to spend outside of the patch with a positive growth rate. 
\par

Besides the contrast between the metapopulation growth rate and the network biomass when $n$ increases, the dependence of the metapopulation growth rate on $d$ is also more complex than that of the total biomass. The three sub figures in Figure \ref{fig:chainlengthB} correspond to different $q$ values. When $q$ is small ($q = 0.5$, Fig. \ref{fig:chainlengthB}(a)), the metapopulation growth rate decrease with respect to $d$ in all curves. When $q$ is larger ($q = 1.5$, Fig \ref{fig:chainlengthB}(b)), the metapopulation growth rate may increase, or decrease before increasing, depending on the value of $n$. When $q$ is large ($q = 10$, Fig \ref{fig:chainlengthB}(c)), the metapopulation growth rate increases with respect to $d$. Therefore, when a directional drift is present, the metapopulation growth rate is no longer always decreasing with respect to the diffusion rate $d$ as discussed in Remark 2.2.

\begin{figure}
     \centering
     \begin{subfigure}[b]{0.32\textwidth}
         \centering
          \includegraphics[scale=.17]{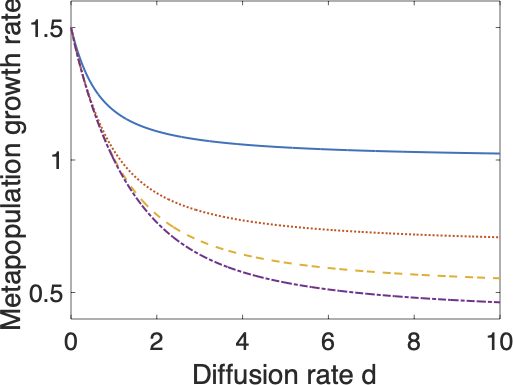}
         \caption{}
         \label{fig:a1}
     \end{subfigure}
     \begin{subfigure}[b]{0.32\textwidth}
        \centering
         \includegraphics[scale=.17]{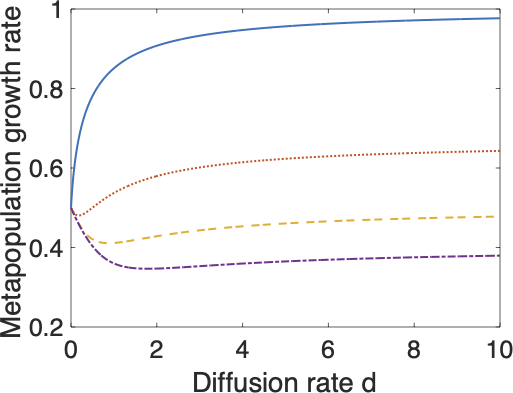}
             \caption{}
         \label{fig:b1}
     \end{subfigure}
       \begin{subfigure}[b]{0.32\textwidth}
         \centering
                    \includegraphics[scale=.17]{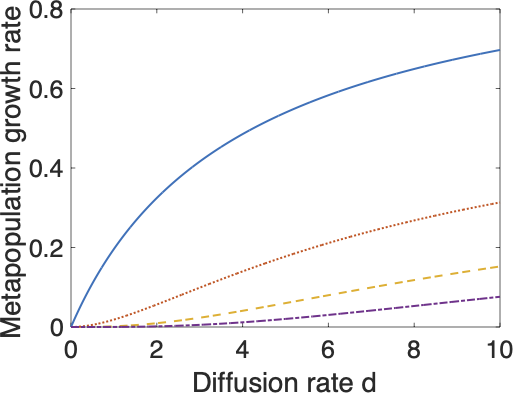}

      \caption{}
         \label{fig:c1}
     \end{subfigure}
     \caption{The metapopulation growth rate as a function of diffusion rate  $d$ for an $n$ patch straight stream when the only patch with positive growth rate $r$ is at the upstream end. The number of patches $n$ vary from $n=2$ (solid curves), $n=3$ (dotted curves), $n=4$ (dashed curves) and $n=5$ (dash-dotted curves). In these figures, we take $r = 2$. The values of $q$ vary from $q = 0.5$ in (a), $q = 1.5$ in (b), to $q = 10$ in (c).}
     \label{fig:chainlengthB}
     \vskip -20pt
\end{figure}

\section{Concluding remarks}\label{section-conclusion}
For all stream networks considered, we found that the largest network biomass is achieved when resources are concentrated in the most upstream patches. Meanwhile, to maximize the metapopulation growth rate, it seems better to concentrate resources in one of the downstream patches.
We also observed that the  drift-diffusion ratio $q/d$ could promote the network biomass if source patches  are located upstream. 
Moreover, the lower bound of the metapopulation growth rate is increasing in $q/d$ if  source patches are located downstream. These observations suggest that a larger drift-diffusion ratio is beneficial for the species in some aspects. This is in contrast with the results proved in \cite{chen2022invasion, chen2021} which state that, for two competing species with equal growth rates and carrying capacities, the species with a smaller drift rate $q$ or a larger diffusion rate $d$ will out-compete the other species in a straight stream network.

We presented two different approaches to study the question of how to maximize the network biomass in a stream network. The first approach, applied to the three-node networks in Section \ref{section-biomass},
makes use of sign patterns imposed by assumptions on the stream flow. Meanwhile, the second approach takes advantage of monotonicity properties of the model. Though this latter approach is more broadly applicable as it does not rely on the explicit structure of the stream network, the sign-pattern approach developed here may prove valuable for studying other types of biological interactions on stream networks. For example, when considering how resource distribution impacts competitive outcomes for a two-species competition model defined on a stream network, the sign pattern approach can be used to determine, based on the distribution of resources for each species, when a semitrivial (exclusion) equilibrium is stable \cite{nguyen2023impact}.

Much of the existing literature related to maximizing the network biomass in spatial models assumes that either the species carrying capacity changes with resource availability or both the carrying capacity and growth rate vary proportionally with changes in resources.  Here we take the alternative approach of fixing the carrying capacity and changing the growth rates.
Though population growth rates have been shown to depend on resource availability in many species (see for example \cite{sibly2002population} and the references therein), this growth is limited by factors such as biological constraints and density dependence. Thus, the growth rate distributions found to maximize the network biomass or metapopulation growth rate may not be biological feasible. From a management perspective, we may want to instead consider the case where, rather than determining the growth rate at each patch, additional resources are provided to supplement the growth in that patch. Such a scenario is described by
\begin{equation}\label{eq-logistic-modified}
u_i' 
= (\bar r_i + r_i)u_i\Big(1-\frac{u_i}{K_i}\Big)+ \sum_{j=1}^n \big(\ell_{ij} u_j - \ell_{ji}u_i\big), \quad i=1,\ldots,n,
\end{equation}
where $\bar r_i$ denotes the intrinsic growth rate in patch $i$ and $r_i$ represents the increase in the growth rate due to resource supplementation. 
Numerical simulations (not included in the current paper) suggest that under some conditions on the parameters $d, q$ and $r$, our findings continue to hold for model \eqref{eq-logistic-modified}. 
In future work, we aim to rigorously study the question of maximizing the network biomass in model \eqref{eq-logistic-modified}.

Another possible modification to our model could be introducing outflows from the most downstream patches (see \cite{chen2021} which considers a stream to a sea or to a lake). Based on numerical simulations, we conjecture that  our results still hold for a large enough drift rate $q$. Additionally, since our model does not allow for patches to be sinks, we could modify the model (see \cite{arino2019number} for more details) to study these scenarios. Finally, we will also consider how the distribution of resources may impact the maximum sustainable yield when harvesting is incorporated into a stream network.

\bigskip

\noindent{\bf Declarations of interest}

None.\medskip

\noindent{\bf Acknowledgments}

This work was partially supported by the National Science Foundation (NSF) through the Grant DMS-1716445 (ZS). 
All authors would like to thank the Department of Mathematics at the University of Central Florida, which receives a funding through the NSF Grant DMS-2132585, for providing support to organize/participate the ``{\it CBMS Conference: Interface of Mathematical Biology and Linear Algebra}'' where this research was initiated and developed. The authors also thank the anonymous reviewers for their helpful suggestions.

\bibliographystyle{plain}
\bibliography{ref}

\newpage 
\appendix
\counterwithout{figure}{section}

\section{Proof of Theorem \ref{theorem growth rate}(ii)}
Let $\rho(d, q, \bm r)$  be the metapopulation growth rate. Denote $S=\{(r_1, r_2, r_3)\in\mathbb{R}^3_+: r_1+r_2+r_3=r\}$. Fix a small $\epsilon>0$. Let

$$
S_3=\{(r_1, r_2, r_3)\in S: r_1\ge q \ \ \text{or}\ \ |r_2-r_3|\le \epsilon\}
$$
and 

$$
S_4=\{(r_1, r_2, r_3)\in S: r_1\le 3q/2 \ \ \text{and}\ \ |r_2-r_3|\ge \epsilon/2\}.
$$
Then $\overline{S\backslash S_3}\subset S_4$ and $S_4=S_{41}\cup S_{42}$, where 

$$
S_{41}=\{(r_1, r_2, r_3)\in S: r_1\le 3q/2 \ \ \text{and}\ \ r_2\ge \epsilon/2+r_3\}
$$
$$
S_{42}=\{(r_1, r_2, r_3)\in S: r_1\le 3q/2 \ \ \text{and}\ \ r_3\ge \epsilon/2+r_2\}.
$$

If $\bm r\in S_3$, then $r_1-2q\le r-2q$, $r_2=r-r_1-r_3\le \max\{r-q, (r+\epsilon)/2\}=:a<r$, and $r_3\le a$. Hence, we have

$$
\rho(0, q, \bm r)=\max\{r_1-2q, r_2, r_3\}\le a.
$$
By the continuity of $\rho$ \cite{zedek1965continuity}, there exists $d_3>0$ such that 
\begin{equation}\label{ess1ii}
\rho(d, q, \bm r)<r-\epsilon
\end{equation}
for all $d\in [0, d_3]$ and $\bm r\in S_3$. Since $\rho(0, q, (0, r, 0))=\rho(0,q, (0, 0, r))=r$, there exists $d_4<d_3$ such that 
\begin{equation}\label{ess2ii}
\rho(d, q, (0, 0, r))>r-\epsilon
\end{equation} 
for all $d\in [0, d_4]$. By \eqref{ess1ii}-\eqref{ess2ii} and $(0, r, 0), (0, 0, r)\in S_4$, we have 

$$
\max\{\rho(d, q, \bm r ): \bm r\in S\}= \max\{\rho(d, q, \bm r): \bm r\in S_4\}
$$
for all $d\in [0, d_4]$, i.e., the maximum of $\rho$ is attained in $S_4$.

By symmetry, it suffices to show that the maximum of $\rho$ in $S_{42}$ is attained at $\bm r=(0, 0, r)$ when $d$ is small. 
For any $\bm r\in  S_{42}$, we have $\rho(0, q, \bm r )=r_3$, where $r_3$ is a simple  eigenvalue of $dD+qQ+\text{diag}\{r_i\}$. Therefore, there exists $d_5<d_4$ such that $\rho(d,q, \bm r)$ is analytic for $d\in [0, d_5]$ and $\bm r\in S_{42}$. Hence, the derivatives of $\rho$ are continuous for $d\in [0, d_5]$ and $\bm r\in  S_{42}$.

Let $h=(-1, 0, 1)$ or $(0, -1, 1)$. It is easy to compute the directional derivatives of $\rho$ with respect to $\bm r$:
$$
D_h\rho(0, q, \bm r)=1, \ \text{for any}\ \bm r\in S_{42}.
$$
By continuity, there exists $d^{**}<d_5$ such that $D_h\rho(d, q, \bm r)>0$ for any $0\le d\le d^{**}$ and $\bm r\in S_{42}$. Hence, the maximum of $\rho(d,q,\bm r)$ in $S_{42}$ is attained at $\bm r=(0, 0, r)$ for any $0\le d\le d^{**}$.

\section{Configurations for stream networks with 4 nodes}
In Figure \ref{fig2} we show all homogeneous flow stream networks with four nodes.
\begin{figure}[p]
\resizebox{.45\totalheight}{!}{
\centering
\begin{tikzpicture}
\begin{scope}[every node/.style={draw}, node distance= 1.5 cm]

    \node[circle] (1) at (-4-1.5,   0-0) {$1$};
    \node[circle] (2) at (-4+0,     0-0) {$2$};
    \node[circle] (3) at (-4+1.5,   0-0) {$3$};
    \node[circle] (4) at (-4+0,     0-4) {$4$};

    \node[circle] (5) at (-4-1.5,   -6-0) {$1$};
    \node[circle] (6) at (-4+1.5,   -6-0) {$2$};
    \node[circle] (7) at (-4+0,     -6-2) {$3$};
    \node[circle] (8) at (-4+0,     -6-4) {$4$};

    \node[circle] (9) at  (0+0,     0-0) {$1$};
    \node[circle] (10) at (0+0,     0-2) {$2$};
    \node[circle] (11) at (0+0,     0-4) {$3$};
    \node[circle] (12) at (0+0,     0-6) {$4$};

    \node[circle] (13) at (4+0,     -6-0) {$1$};
    \node[circle] (14) at (4+0,     -6-2) {$2$};
    \node[circle] (15) at (4-1.5,   -6-4) {$3$};
    \node[circle] (16) at (4+1.5,   -6-4) {$4$};

    \node[circle] (17) at (4+0,     0-0) {$1$};
    \node[circle] (18) at (4-1.5,   0-4) {$2$};
    \node[circle] (19) at (4+0,     0-4) {$3$};
    \node[circle] (20) at (4+1.5,   0-4) {$4$};

    \node[circle] (21) at (4+0,     -12-0) {$1$};
    \node[circle] (22) at (4-1.5,   -12-2) {$2$};
    \node[circle] (23) at (4+1.5,   -12-2) {$3$};
    \node[circle] (24) at (4+0,     -12-4) {$4$};

    \node[circle] (25) at (0,-12) {$1$};
    \node[circle] (26) at (-1.5,-14) {$2$};
    \node[circle] (27) at (1.5,-14) {$3$};
    \node[circle] (28) at (0,-16) {$4$};

    \node[circle] (29) at (-4+0,    -12-0) {$1$};
    \node[circle] (30) at (-4-1.5,  -12-2) {$2$};
    \node[circle] (31) at (-4+1.5,  -12-2) {$3$};
    \node[circle] (32) at (-4+0,    -12-4) {$4$};

    \node[circle] (33) at (-4-1.5,  -18-0) {$1$};
    \node[circle] (34) at (-4+1.5,  -18-0) {$2$};
    \node[circle] (35) at (-4-1.5,  -18-4) {$3$};
    \node[circle] (36) at (-4+1.5,  -18-4) {$4$};

    \node[circle] (37) at (4-1.5,  -18-0) {$1$};
    \node[circle] (38) at (4+1.5,  -18-0) {$2$};
    \node[circle] (39) at (4-1.5,  -18-4) {$3$};
    \node[circle] (40) at (4+1.5,  -18-4) {$4$};
    
\end{scope}
\begin{scope}[every node/.style={fill=white},
              every edge/.style={thick}]
    
    \draw[line width=0.5mm] [->](1) to [bend right=15] node[left=5] {{}} (4); 
    \draw[thick, dashed] [<-](1) to node[right=4] {{}} (4); 
    \draw[line width=0.5mm] [->](2) to [bend left=15] node[right=5] {{}} (4); 
    \draw[thick, dashed] [<-](2) to node[left=4] {{}} (4); 
    \draw[line width=0.5mm] [->](3) to [bend left=15] node[right=5] {{}} (4); 
    \draw[thick, dashed] [<-](3) to node[left=4] {{}} (4); 

    \draw[line width=0.5mm] [->](5) to [bend right] node[left=5] {{}} (7); 
    \draw[thick, dashed] [<-](5) to node[right=4] {{}} (7); 
    \draw[line width=0.5mm] [->](6) to [bend left] node[right=5] {{}} (7); 
    \draw[thick, dashed] [<-](6) to node[left=4] {{}} (7); 
    \draw[line width=0.5mm] [->](7) to [bend left] node[right=5] {{}} (8); 
    \draw[thick, dashed] [<-](7) to node[left=4] {{}} (8); 

    \draw[line width=0.5mm] [->](9) to [bend right] node[left=0.1] {{}} (10); 
    \draw[line width=0.5mm] [->](10) to [bend right] node[left=0.1] {{}} (11); 
    \draw[line width=0.5mm] [->](11) to [bend right] node[left=0.1] {{}} (12); 
    \draw[thick, dashed] [<-](9) to [bend left] node[right=0.1] {{}} (10); 
    \draw[thick, dashed] [<-](10) to [bend left] node[right=0.1] {{}} (11); 
    \draw[thick, dashed] [<-](11) to [bend left] node[right=0.1] {{}} (12); 

    \draw[line width=0.5mm] [->](13) to [bend left] node[right=5] {{}} (14); 
    \draw[thick, dashed] [<-](13) to node[left=4] {{}} (14); 
    \draw[line width=0.5mm] [->](14) to [bend right] node[left=5] {{}} (15); 
    \draw[thick, dashed] [<-](14) to node[right=4] {{}} (15); 
    \draw[line width=0.5mm] [->](14) to [bend left] node[right=5] {{}} (16); 
    \draw[thick, dashed] [<-](14) to node[left=4] {{}} (16); 

    \draw[line width=0.5mm] [->](17) to [bend right=15] node[left=5] {{}} (18); 
    \draw[thick, dashed] [<-](17) to node[right=4] {{}} (18); 
    \draw[line width=0.5mm] [->](17) to [bend left=15] node[right=5] {{}} (19); 
    \draw[thick, dashed] [<-](17) to node[left=4] {{}} (19); 
    \draw[line width=0.5mm] [->](17) to [bend left=15] node[right=5] {{}} (20); 
    \draw[thick, dashed] [<-](17) to node[left=4] {{}} (20); 

    \draw[line width=0.5mm] [->](25) to [bend right] node[left=5]{{}} (26); 
    \draw[thick, dashed] [<-](25) to node[right=4] {{}} (26); 
    \draw[line width=0.5mm] [->](25) to [bend left] node[right=5] {{}} (27); 
    \draw[thick, dashed] [<-](25) to node[left=4] {{}} (27); 
    \draw[line width=0.5mm] [->](26) to [bend right] node[left=5] {{}} (28); 
    \draw[thick, dashed] [<-](26) to node[right=4] {{}} (28); 
    \draw[line width=0.5mm] [->](27) to [bend left] node[right=5] {{}} (28); 
    \draw[thick, dashed] [<-](27) to node[left=4] {{}} (28); 

    \draw[line width=0.5mm] [->](21) to [bend right] node[left=5] {{}} (22); 
    \draw[thick, dashed] [<-](21) to node[right=4] {{}} (22); 
    \draw[line width=0.5mm] [->](22) to [bend right] node[left=5] {{}} (24); 
    \draw[thick, dashed] [<-](22) to node[right=4] {{}} (24); 
    \draw[line width=0.5mm] [->](23) to [bend left] node[right=5] {{}} (24); 
    \draw[thick, dashed] [<-](23) to node[left=4] {{}} (24); 

    \draw[line width=0.5mm] [->](29) to [bend right] node[left=5] {{}} (30); 
    \draw[thick, dashed] [<-](29) to node[right=4] {{}} (30); 
    \draw[line width=0.5mm] [->](29) to [bend left] node[right=5] {{}} (31); 
    \draw[thick, dashed] [<-](29) to node[left=4] {{}} (31); 
    \draw[line width=0.5mm] [->](30) to [bend right] node[left=5] {{}} (32); 
    \draw[thick, dashed] [<-](30) to node[right=4] {{}} (32); 

    \draw[line width=0.5mm] [->](33) to [bend right] node[left=5] {{}} (35); 
    \draw[thick, dashed] [<-](33) to node[right=4] {{}} (35); 
    \draw[line width=0.5mm] [->](34) to [bend right] node[right=5] {{}} (35); 
    \draw[thick, dashed] [<-](34) to node[left=4] {{}} (35); 
    \draw[line width=0.5mm] [->](34) to [bend right] node[left=5] {{}} (36); 
    \draw[thick, dashed] [<-](34) to node[right=4] {{}} (36); 

    \draw[line width=0.5mm] [->](37) to [bend right] node[left=5] {{}} (39); 
    \draw[thick, dashed] [<-](37) to node[right=4] {{}} (39); 
    \draw[line width=0.5mm] [->](38) to [bend right] node[right=5] {{}} (39); 
    \draw[thick, dashed] [<-](38) to node[left=4] {{}} (39); 
    \draw[line width=0.5mm] [->](38) to [bend right] node[left=5] {{}} (40); 
    \draw[thick, dashed] [<-](38) to node[right=4] {{}} (40); 
    \draw[line width=0.5mm] [->](37) to [bend right] node[left=5] {{}} (40); 
    \draw[thick, dashed] [<-](37) to node[right=4] {{}} (40); 

\end{scope}
\end{tikzpicture}
}
\caption{Homogeneous flow stream networks with four nodes. The solid edges have weight $d+q$ and the dashed edges have weight $d$.}\label{fig2}
\end{figure}
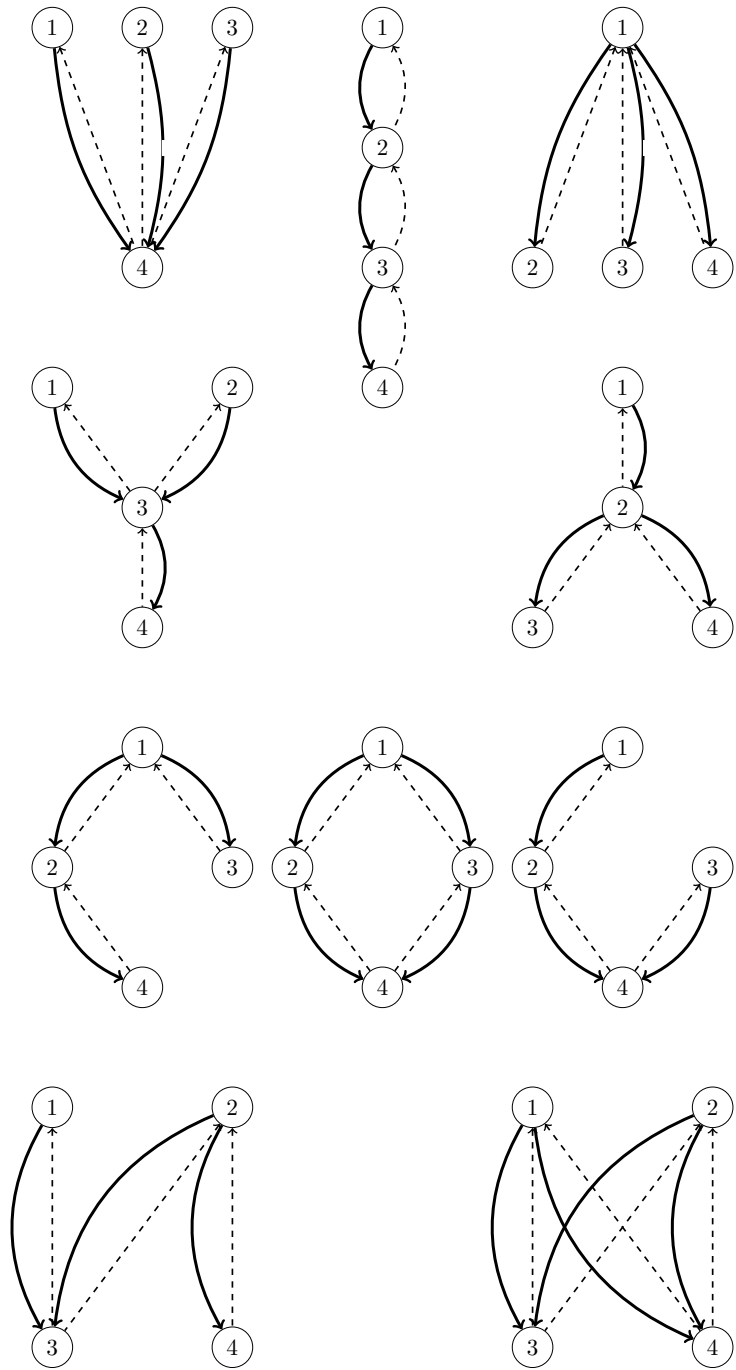

\section{Proof of Theorem \ref{theorem:growth_n}(i)}
Let $\rho(d, q, \bm r)$  be the metapopulation growth rate. Let $V=\{1, ..., n\}$ be the set of all the nodes. Suppose that  there are $k$  downstream end nodes: $n-k+1, n-k+2, \dots, n$.  If $k=1$, then the network is a straight network and the proof is similar to the proof of Theorem \ref{theorem growth rate}(i). Therefore, we  assume $k\ge 2$.
Let $V_e=\{n-k+1, \dots, n\}$ and $V_o=V\backslash V_e=\{1, \dots, n-k\}$.  Denote $S=\{\bm r=(r_1, \dots, r_n)\in\mathbb{R}^n_+: \sum_{i=1}^n r_i=r\}$.  Fix a small $\epsilon>0$. Let $S_5=S_5^1\cup S_5^2$, where

$$
S_5^1=\{\bm r\in S: r_i\ge q/2 \ \text{for some}\  i\in V_o\}
$$
and 

$$
S_5^2=\{\bm r\in S:  r_j-r_l\le \epsilon \ \text{for some} \ l, j \ (l\neq j)\in V_e, 
\ \text{where} \ r_j=\max_{i\in V_e} r_i\}.
$$
Then $\overline{S\backslash S_5} \subset \cup_{j=n-k+1}^n S_{6j}:=S_6$, where 

$$
S_{6j}=\{\bm r\in S: r_i\le 3q/4   \ \text{for all}\  i\in V_o \ \ \text{and}\ \ r_j\ge \epsilon/2+r_l\ \text{for all} \ l \ (l\neq j)\in V_e\}
$$
for $j=n-k+1, \dots, n$.

It is easy to see that

$$
\rho(0, q, \bm r)=\max\{r_1-a_1q, \dots, r_{n-k}-a_{n-k}q, r_{n-k+1}, \dots, r_n\},
$$
where $a_i$ is the number of adjacent downstream nodes of node $i$ for $i\in V_o$. \

If $\bm r\in S_5$, then $\rho(0, q, \bm r)\le \max\{r-q/2, (r+\epsilon)/2\}=:a<r$.
By the continuity of $\rho$ \cite{zedek1965continuity}, there exists $d_5>0$ such that 
\begin{equation}\label{ess1iiC}
\rho(d, q, \bm r)<r-\epsilon
\end{equation}
for all $d\in [0, d_5]$ and $\bm r\in S_5$. Since $\rho(0, q, (0,\dots, 0, r))=r$, there exists $d_6<d_5$ such that 
\begin{equation}\label{ess2iiC}
\rho(d, q,(0, \dots, 0, r))>r-\epsilon
\end{equation} 
for all $d\in [0, d_6]$. By \eqref{ess1iiC}-\eqref{ess2iiC} and $(0, \dots, 0, r)\in S_6$, we have 

$$
\max\{\rho(d, q, \bm r): \bm r\in S\}= \max\{\rho(d, q, \bm r): \bm r\in S_6\}
$$
for all $d\in [0, d_6]$, i.e., the maximum of $\rho$ is attained in $S_6$.

It suffices to show that the maximum of $\rho$ in $S_{6j}$, $j=n-k+1, \dots, n$, is attained when all the resources are concentrated at node $j$  if $d$ is small. For brevity, we only consider the case $j=n$ in the following. 
For any $\bm r\in  S_{6n}$, we have $\rho(0, q,\bm r)=r_n$, where $r_n$ is a simple  eigenvalue of $dD+qQ+\text{diag}\{r_i\}$. Therefore, there exists $d_7<d_6$ such that $\rho(d,q, \bm r)$ is analytic for $d\in [0, d_7]$ and $\bm r\in S_{6n}$. Hence, the derivatives of $\rho$ are continuous for $d\in [0, d_7]$ and $\bm r\in  S_{6n}$.

Let $h=(-1, 0, \dots, 0, 1)$, $(0, -1, 0, \dots, 0, 1)$, \dots, or $(0, \dots, 0, -1, 1)$. It is easy to compute the directional derivatives of $\rho$ with respect to $\bm r$:

$$
D_h\rho(0,q, \bm r)=1, \ \text{for any}\ \bm r\in S_{6n}.
$$
By continuity, there exists $\tilde{d}^{*}<d_7$ such that $D_h\rho(d, q,\bm r)>0$ for any $0\le d\le \tilde{d}^{*}$ and $\bm r\in S_{6n}$. Hence, the maximum of $\rho(d,q, \bm r)$ in $S_{6n}$ is attained at $\bm r=(0,\dots, 0, r)$ for any $0\le d\le \tilde{d}^{*}$.

\end{document}